\documentclass [12pt]{amsart}
\usepackage[utf8]{inputenc}
\pdfoutput=1
\usepackage{amsmath,amssymb,amsthm,amsfonts}
\usepackage{mathtools}
\DeclareFontFamily{U}{mathx}{\hyphenchar\font45}
\DeclareFontShape{U}{mathx}{m}{n}{
      <5> <6> <7> <8> <9> <10>
      <10.95> <12> <14.4> <17.28> <20.74> <24.88>
      mathx10
}{}
\DeclareSymbolFont{mathx}{U}{mathx}{m}{n}
\DeclareMathAccent{\widecheck}{0}{mathx}{"71}

\usepackage[english]{babel}
\usepackage{comment}
\usepackage{hyperref}
\usepackage{bbm}
\usepackage{mathrsfs}

\usepackage{tikz,graphicx,color}
\usepackage{tikz-cd}
\usepackage{tikz-3dplot}
\usetikzlibrary{calc}
\usetikzlibrary{arrows}
\usetikzlibrary{shapes}
\usetikzlibrary{patterns}
\usetikzlibrary{positioning}
\usetikzlibrary{arrows.meta}
\usetikzlibrary{decorations.markings}
\usetikzlibrary{knots}

\usepackage{epstopdf}

\usepackage[arrow]{xy}
\usepackage{diagbox}
\usepackage{subfig}
\usepackage{arcs}
\usepackage{xcolor}
\usepackage{xspace}

\usepackage[margin=1.0in]{geometry}

\usepackage{enumitem}
\usepackage[b]{esvect}
\usepackage{changepage}
\usepackage{letltxmacro}
\usepackage{thmtools,etoolbox}

\def\myarabic#1{\normalfont(\roman{#1})}
\newlist{theoremlist}{enumerate}{1}
\setlist[theoremlist]{label=\myarabic{theoremlisti},ref={\myarabic{theoremlisti}},itemindent=0pt,labelindent=0pt,
  leftmargin=*,noitemsep}

\makeatletter
\renewcommand{\p@theoremlisti}{\perh@ps{\thetheorem}}
\protected\def\perh@ps#1#2{\textup{#1#2}}
\newcommand{\itemrefperh@ps}[2]{\textup{#2}}
\newcommand{\itemref}[1]{\begingroup\let\perh@ps\itemrefperh@ps\ref{#1}\endgroup}
\makeatother

\usepackage{nameref,hyperref}
\usepackage[capitalize]{cleveref}

\newtheorem{theorem}{Theorem}[section]

\newtheorem{lemma}[theorem]{Lemma}
\newtheorem{proposition}[theorem]{Proposition}
\newtheorem{corollary}[theorem]{Corollary}
\theoremstyle{definition}
\newtheorem{remark}[theorem]{Remark}
\theoremstyle{definition}
\newtheorem{definition}[theorem]{Definition}
\newtheorem{conjecture}[theorem]{Conjecture}

\theoremstyle{definition}

\theoremstyle{definition}

\crefname{figure}{Figure}{Figures}

\def\figref#1(#2){Figure~\hyperref[#1]{\ref*{#1}(#2)}}

\addtotheorempostheadhook[theorem]{\crefalias{theoremlisti}{theorem}}
\addtotheorempostheadhook[lemma]{\crefalias{theoremlisti}{lemma}}
\addtotheorempostheadhook[proposition]{\crefalias{theoremlisti}{proposition}}
\addtotheorempostheadhook[corollary]{\crefalias{theoremlisti}{corollary}}

\def\<{{\langle}}
\def\>{{\rangle}}

\def\Povtp_#1{\Pi_{#1}^{>0}}
\def\Povtnn_#1{\Pi_{#1}^{\geq0}}

\newcounter{todofigure}

\numberwithin{equation}{section}

\makeatletter
\@namedef{subjclassname@2020}{\textup{2020} Mathematics Subject Classification}
\makeatother

\begin{document}
\numberwithin{equation}{section}

\title{Half-Periodicity of Zamolodchikov Periodic Cluster Algebras}
\author{Ariana Chin}
\address{Department of Mathematics, University of California, Los Angeles, CA 90095, USA}
\email{{\href{mailto:arianagchin@math.ucla.edu}{arianagchin@math.ucla.edu}}}
\thanks{This research did not receive any specific grant from funding agencies in the public, commercial, or not-for-profit sectors.}
\date{\today}

\subjclass[2020]{
  Primary:
  13F60, 
  Secondary:
  05E99, 
  22E46
}

\keywords{Cluster algebras, Zamolodchikov periodicity, half-periodicity, maximal green sequences, $Y$-systems, $T$-systems}

\begin{abstract}
    In 2007, Fomin and Zelevinsky introduced the \textit{bipartite belt}, a sequence of bipartite mutations whose exchange relations form a discrete dynamical system.
    Periodicity of this system is known as \textit{Zamolodchikov periodicity}.
    In our previous work we have classified all Zamolodchikov periodic cluster algebras, but behavior halfway through the period was still unknown.
    This so-called \textit{half-periodicity} was conjectured by Kuniba--Nakanishi--Suzuki
    for $Y$-systems of finite type Cartan matrices, and was proved by Inoue--Iyama--Keller--Kuniba--Nakanishi for tensor products of two simply-laced Dynkin diagrams.
    In this paper, we prove that for any Zamolodchikov periodic cluster algebra, the form at the half-period is a permutation of the cluster variables of order at most two.
\end{abstract}

\maketitle

\setcounter{tocdepth}{1}
\tableofcontents

\section{Introduction}
\subsection{Zamolodchikov periodicity}

Cluster algebras were first introduced in \cite{fomin1} by Fomin and Zelevinsky with applications to dual canonical bases, total positivity in semisimple Lie groups, and Zamolodchikov periodicity for $Y$-systems of finite root systems.
In their fourth paper on cluster algebras, Fomin--Zelevinsky introduced what is known as the \textit{bipartite belt}, a sequence of bipartite mutations defined below whose exchange relations form a discrete dynamical system they called a \textit{generalized $Y$-system} \cite{fomin4}.

An $n\times n$ matrix $B$ is \textit{bipartite} if there exists a coloring $\epsilon: [n]\rightarrow \{\circ, \bullet\}$ of $[n]$ such that for all $i, j$ of the same color, $b_{ij} = b_{ji} = 0$.
It is well known that mutation is a local move, so mutations $\mu_i$ and $\mu_j$ commute if $b_{ij} = b_{ji} = 0$.
Thus in a bipartite matrix, we may define the \textit{bipartite mutations} $\mu_{\circ}$ and  $\mu_{\bullet}$ as the combined mutation at all white vertices and all black vertices, respectively.
A bipartite matrix $B$ is \textit{recurrent} if $\mu_{\circ}\mu_{\bullet}(B) = \mu_{\circ}(-B) = B$. 
The sequence $\mu_{\circ}\mu_{\bullet}\mu_{\circ}\mu_{\bullet}\dots$ is known as the \textit{bipartite belt}.

A bipartite recurrent matrix $B$ is \textit{Zamolodchikov periodic} if the bipartite belt is periodic.
Zamolodchikov periodicity was first observed by Zamolodchikov in his study \cite{zamolodchikov} of thermodynamic Bethe ansatz, initially focusing on the simply-laced Dynkin diagrams $A_n, D_n$, and $E_n$.
In \cite{fomin4}, it was proved that the bipartite belt associated with a generalized Cartan matrix is periodic exactly when the matrix is of finite type.
The conjecture was further proved for all tensor products $\Gamma\otimes \Delta$ of Dynkin diagrams by Keller \cite{keller} and later in a different way by Inoue--Iyama--Keller--Kuniba--Nakanishi \cite{inoue, inoue2}.
In our previous work \cite{chin}, we fully classified all Zamolodchikov periodic cluster algebras.

In particular, the period is a divisor of $2(h + h')$, where $h, h'$ are the Coxeter numbers associated to the bipartite recurrent $B$-matrix.
The behavior after $(h + h')$ mutations is known as \textit{half-periodicity}, and was conjectured in \cite{kuniba} for $Y$-systems and in \cite{inoue2} for $T$-systems of finite type Cartan matrices.
In \cite{inoue}, half-periodicity was proved for tensor products of two simply-laced Dynkin diagrams.
In this paper, we prove half-periodicity for every Zamolodchikov periodic cluster algebra.

\subsection{The main theorem}
The following theorem is the main result of this paper.

\begin{theorem}
\label{mainThm}
    Let $(\Gamma, \Delta)$ be a Zamolodchikov periodic $n\times n$ $B$-matrix with Coxeter numbers $h_{\Gamma}, h_{\Delta}$.
    Let $T(t)$ be the $T$-system associated to $(\Gamma, \Delta)$.
    Then for all $i\in [n]$,
    \begin{align*}
        T_i(t + h_{\Gamma} + h_{\Delta}) = T_{\sigma(i)}(t)
    \end{align*}
    for some automorphism $\sigma\in S_n$ of $(\Gamma, \Delta)$ of order at most two.
    When $h_{\Gamma} + h_{\Delta}$ is even, this permutation is bipartite color preserving.
    Otherwise, it is color reversing.
\end{theorem}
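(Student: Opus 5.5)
The plan is to reduce \cref{mainThm} to the classification of Zamolodchikov periodic bipartite recurrent $B$-matrices from \cite{chin}, and then check half-periodicity family by family. By that classification, every Zamolodchikov periodic pair $(\Gamma,\Delta)$ is built from admissible ADE bigraphs. Each such bigraph lies in one of four classes: a tensor product $\Gamma\otimes\Delta$ of Dynkin diagrams, a twist $\Gamma\times\Delta$, or one of the finitely many exceptional or infinite "affine $\boxtimes$ finite" type families, together with disjoint unions of these. The Coxeter numbers $h_\Gamma,h_\Delta$ are the Coxeter numbers of the associated Dynkin diagrams, since the largest eigenvalues of the adjacency matrices are $2\cos(\pi/h)$.

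First I will dispose of disjoint unions. The $T$-system decouples across connected components, and the components share the same $h_\Gamma,h_\Delta$. A componentwise $\sigma$ therefore glues into a global automorphism of order at most two, and it has the required color behavior.

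For the tensor products $\Gamma\otimes\Delta$ of simply-laced Dynkin diagrams I will cite \cite{inoue}. There the half-period map is $T_{(a,b)}(t+h+h')=T_{(\omega(a),\omega'(b))}(t)$, where $\omega,\omega'$ are the Dynkin involutions induced by $-w_0$. I will then verify that the color statement follows from parity. The time $t+h+h'$ lives on the same color class as $t$ exactly when $h+h'$ is even. So $\sigma$ must send a vertex to one whose color agrees with the color of the shifted time. This gives "preserving" for even $h+h'$ and "reversing" for odd $h+h'$.

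For twists and the remaining families I will try to transport the result. Each twist $\Gamma\times\Delta$ in \cite{chin} is obtained from a tensor product by a covering or folding operation. My approach is to show that the $T$-system of the twist is a specialization of the $T$-system of a tensor product along a graph automorphism. I then need the half-period involution of the tensor product to commute with that automorphism, so that it descends to an involution $\sigma$ of $(\Gamma,\Delta)$.

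For the finitely many exceptional bigraphs that are not obtained by such a construction, I will verify directly by computing $h_\Gamma+h_\Delta$ bipartite mutation steps. This computation is finite, and the mutations can be run in tropical or $Y$-seed form. It suffices to check the principal-coefficient $g$-vectors or $c$-vectors, because by the separation formula and sign-coherence, a seed is determined by its $c$-vectors up to permutation. The permutation read off at time $h+h'$ is then $\sigma$. Applying the same identity twice at time $2(h+h')$, where periodicity is known, gives $\sigma^2=\mathrm{id}$. The order-at-most-two claim then holds automatically once one shows that $\sigma$ exists.

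I expect the main obstacle to be the general existence of $\sigma$ outside the tensor-product case, that is, showing that the half-period seed is a permuted copy of the initial seed rather than some other seed. If the descent argument is awkward, I will try a uniform route instead. Using the maximal green sequence given by the first $h+h'$ bipartite mutations, the final $c$-vectors are all negative simple roots. By the Brüstle–Dupont–Pérotin theorem, this forces the resulting seed to be the initial one up to a permutation $\sigma$. That $\sigma$ is automatically an automorphism of $B$, because the permuted seed has exchange matrix $B$ or $-B$ as dictated by recurrence.

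The color condition again follows from parity. It then only remains to confirm that the green-sequence length is exactly $h+h'$ steps for all families. I expect this to come from the tropical periodicity analysis in \cite{chin}.
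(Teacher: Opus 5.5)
\textbf{Gap 1: how $\sigma$ is produced.} This is the central step beyond simply-laced tensor products, and your fallback route names the right ingredients but assembles them wrongly.

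\begin{itemize}
\item The first $h_\Gamma+h_\Delta$ bipartite mutations are not a green sequence. From the framed seed, $\mu_\circ\mu_\bullet\cdots$ with $h_\Gamma$ factors is already maximal green, so every vertex is red after $h_\Gamma$ steps.
\item The end of a maximal green sequence is not the initial seed up to permutation. Br\"ustle--Dupont--P\'erotin only give a frozen isomorphism with the \emph{coframed} quiver, i.e.\ $C=-P_\sigma$. For $A_1$, the single mutation is maximal green and yields $(1+y_1)/x_1\neq x_1$.
\item The claim that $c$-vectors determine the seed is not a consequence of the separation formula and sign-coherence. It is the deep periodicity theorem of \cite{inoue}, which the paper states only for skew-symmetric $B$; that is why the paper first treats ADE bigraphs.
\end{itemize}

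What works (Remark~\ref{remark1}, Proposition~\ref{ADE}) is to use \emph{two} maximal green sequences out of the framed seed along the belt. One is $\mu_\circ\mu_\bullet\cdots$ going forward ($h_\Gamma$ factors); the other is $\mu_\bullet\mu_\circ\cdots$ going backward ($h_\Delta$ factors). The seeds at times $h_\Gamma$ and $-h_\Delta$ then both have $C$ a negative permutation matrix. The periodicity theorem identifies them up to $\sigma$, which is a shift of exactly $N=h_\Gamma+h_\Delta$.

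The substantive input you defer to \cite{chin} is that these belt segments are maximal green for \emph{every} Zamolodchikov periodic matrix. It does not come from the tropical analysis there. Section~\ref{sec:maximalGreen} proves it from Keller's alternating-ADE result, folding (Lemma~\ref{maxGreenLem}), and component-preserving shuffles over the components of $\Gamma$ (resp.\ $\Delta$). The point is that each step mutates green $\Gamma$-sinks (resp.\ $\Delta$-sinks), so it never leaves a component.

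\textbf{Gap 2: coverage of the classification.} Apart from disjoint unions, only simply-laced tensor products are genuinely handled, via \cite{inoue}.

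\begin{itemize}
\item The descent of twists and of infinite non-tensor families such as $(AD^{n-1})_n$, $(A^{n-1}D)_n$ from tensor products is only hoped for. Finite computation cannot reach infinite families.
\item More seriously, the non-simply-laced matrices never appear. \cite{chin} obtains them from ADE bigraphs by folding and Langlands duality $B\mapsto -B^{\top}$, so half-periodicity must be transported along both operations.
\item Folding is an easy specialization; transposition is not. The paper shows (Lemma~\ref{tropicalT}) that half-periodicity up to positive scalars is equivalent to half-periodicity of all tropical $T$-systems. It moves the latter to $-B^\top$ through the symmetrizer (Lemmas~\ref{scalarsCommute} and~\ref{transposeCommutes}). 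It then kills the scalars using $c_ic_{\sigma(i)}=1$ and the Laurent phenomenon, and uses that transposes arise only with $N$ even.
\end{itemize}

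\textbf{The colour claim.} Your parity argument is fine when a shift-$N$ identity is handed to you, as in \cite{inoue}. When $\sigma$ comes from a seed coincidence, you must first show that it carries the colour class mutated at time $h_\Gamma$ to the class mutated at time $-h_\Delta$. Otherwise, writing $\Sigma(s)$ for the seed at time $s$, the coincidence propagates as a time reflection $\Sigma(h_\Gamma+k)=\sigma\Sigma(-h_\Delta-k)$ rather than a translation. Ruling this out is the job of Lemma~\ref{isomLem}.
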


In Section \ref{prelims}, we introduce cluster algebras, maximal green sequences, and bipartite dynamics of cluster algebras, known as \textit{$T$-systems} and \textit{$Y$-systems}.
In Section \ref{sec:maximalGreen}, we prove that for all Zamolodchikov periodic $B$-matrices, the bipartite belt is a maximal green sequence.
In Section \ref{sec:half-periodicity}, we prove the form at the half-period is a permutation of the cluster variables, and observe when it gives the identity permutation.
In Section \ref{sec:coloredMutations}, we observe that in the case of single Dynkin diagrams under a certain global tropical ordering, half-periodicity yields a bijection between almost positive roots and colored tropical mutations, as conjectured in our earlier work \cite{chin}.

\section{Preliminaries}
\label{prelims}

\subsection{Introduction to cluster algebras with coefficients}

An $n\times n$ matrix $B$ is \textit{skew-symmetrizable} if there exists some $c\in \mathbb{R}_{>0}^n$ such that $\forall i, j\in [n]$, $c_ib_{ij} = -c_jb_{ji}$.
In other words, a matrix $B$ is skew-symmetrizable if there exists some nonzero scaling of the rows $c\in \mathbb{R}_{>0}^n$ that makes the matrix skew-symmetric.

\begin{definition}
    Let $I$ be a finite index set.
    \begin{itemize}
        \item[(1)]
            A \textit{semifield} is an abelian multiplicative group $(\mathbb{P}, \oplus, \cdot)$ equipped with addition $\oplus$ which is commutative, associative, and distributive with respect to multiplication in $\mathbb{P}$.
        \item[(2)]
            For an $I$-tuple of variables $y = (y_i)_{i\in I}$, the \textit{universal semifield} $\mathbb{Q}_{sf}(y)$ consists of all rational functions $P(y) / Q(y)$, where $P(y)$ and $Q(y)$ are nonzero polynomials in the $y_i$'s over $\mathbb{Z}_{>0}$, equipped with the usual operations of multiplication and addition.
        \item[(3)]
            For an $I$-tuple of variables $y = (y_i)_{i\in I}$, the \textit{tropical semifield} Trop($y$) is the abelian multiplicative group freely generated by the variables $(y_i)_{i\in I}$, equipped with addition $\oplus$ defined by
            $$\prod_i y_i^{a_i} \oplus \prod_i y_i^{b_i} = \prod_i y_i^{\min(a_i, b_i)}.$$
    \end{itemize}
\end{definition}

\begin{definition}
    A \textit{labeled seed} in field $\mathcal{F}$ is a triple $(x, y, B)$ where
    \begin{itemize}
        \item 
            $B = (b_{ij})$ is an $n\times n$ skew-symmetrizable integer matrix,
        \item 
            $x = (x_1, \dots, x_n)$ is an $n$-tuple of elements in $\mathcal{F}$ forming a \textit{free generating set}; that is, $x_1, \dots, x_n$ are algebraically independent over $\mathbb{QP}$, and $\mathcal{F} = \mathbb{QP}(x_1, \dots, x_n)$,
        \item 
            $y = (y_1, \dots, y_n)$ is an $n$-tuple of algebraically independent elements in $\mathbb{P}$.
    \end{itemize}
\end{definition}
We use the following notation:
\begin{itemize}
    \item 
        $x$ is the (labeled) \textit{cluster} of this seed $(x, B)$,
    \item 
        the elements $x_1, \dots, x_n$ are its \textit{cluster variables},
    \item 
        the elements $y_1, \dots, y_n$ are its \textit{frozen variables} (or \textit{coefficients}),
    \item 
        $B$ is the \textit{exchange matrix}, also known as the $B$-matrix.
\end{itemize}
When the coefficients lie in the tropical semifield Trop($y$), they are called \textit{principal coefficients}.


\begin{definition}
    Let $(x, y, B)$ be a labeled seed. 
    Let $k\in [n]$.
    The \textit{seed mutation} $\mu_k$ in direction $k$ transforms $(x, y, B)$ into the new labeled seed $\mu_k(x, y, B) = (x', y', B')$ defined as follows:
    \begin{itemize}
        \item 
            The cluster $x' = (x_1', \dots, x_n')$ is given by $x_j' = x_j$ for all $j\neq k$, whereas $x_k'$ is determined by the \textit{exchange relation}
            \begin{equation}
            \label{mutationX}
                x_kx_k' = \frac{y_{k}}{1 \oplus y_k}\prod_{b_{ik} > 0} x_i^{|b_{ik}|} + \frac{1}{1\oplus y_k}\prod_{b_{ik} < 0} x_i^{|b_{ik}|}.
            \end{equation}
        \item 
            The coefficients $y' = (y_1', \dots, y_n')$ are given by
            \begin{equation}
            \label{mutationY}
                y_i' = \begin{cases}
                    y_i^{-1} & i = k,\\
                    y_iy_k^{\max(0, b_{ki})}(1\oplus y_k)^{-b_{ki}} & i\neq k.
                \end{cases}
            \end{equation}
        \item  
            The exchange matrix $B' = (b_{ij}')$ is given by
            $$b_{ij}' = \begin{cases}
                -b_{ij} & \text{if }i = k \text{ or } j = k,\\
                b_{ij} + b_{ik}b_{kj} & \text{if } b_{ik}, b_{kj} > 0,\\
                b_{ij} - b_{ik}b_{kj} & \text{if } b_{ik}, b_{kj} < 0,\\
                b_{ij} & \text{otherwise.}
            \end{cases}$$
    \end{itemize}
\end{definition}

\begin{definition}
    Let $(x, y, B)$ be a seed.
    Let $\mathcal{Y}$ be the union of all $\{\frac{y_i}{1\oplus y_i}, \frac{1}{1\oplus y_i}\}$ for every coefficient $y_i$ obtained from any sequence of mutations of this seed.
    Let $\mathcal{X}$ be the set of all cluster variables obtained from any sequence of mutations of this seed.
    The \textit{cluster algebra} $\mathcal{A}$ generated by the seed $(x, y, B)$ is the $\mathbb{ZP}$-subalgebra of the ambient field $\mathcal{F}$ generated by all cluster variables: $\mathcal{A} = \mathbb{ZP}[\mathcal{X}]$.
\end{definition}

In general, $\mathcal{X}$ is an infinite set. When $\mathcal{X}$ is finite, we say the cluster algebra $\mathcal{A}$ is of \textit{finite type}.
The following result introduces an interesting connection between root systems and cluster algebras of finite type.

\begin{theorem}[\cite{fomin}]
\label{fominBijection}
    Let $(B, x, y)$ be a seed of a finite type cluster algebra of type $\Lambda$.
    There is a unique bijection between the almost positive roots of $\Lambda$ and the cluster variables of the cluster algebra generated by $(B, x, y)$, which sends $\alpha\mapsto x[\alpha]$, where
    \[x[\alpha] = \frac{P_{\alpha}(x)}{x^{\alpha}}.\]
    Here, $P_{\alpha}(x)$ is a polynomial over $\mathbb{Z}\mathbb{P}$ with nonzero constant term.
    Moreover, every coefficient of $P_{\alpha}$ is a polynomial in the elements of $\mathcal{Y}$ with positive integer coefficients.
\end{theorem}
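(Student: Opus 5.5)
The plan is to first prove the statement for a bipartite initial seed whose exchange matrix $B$ has Cartan counterpart of type $\Lambda$, and then transport it to an arbitrary seed of the same cluster algebra. Throughout I use the denominator vector of a cluster variable $x$ with respect to the cluster $x_1,\dots,x_n$, namely the exponent vector $d(x)$ for which $x = P(x)/x^{d(x)}$ with $P$ not divisible by any $x_i$.

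\textbf{Step 1 (the bipartite case).} Let $\epsilon$ be the bipartite coloring. Define the piecewise-linear involutions $\tau_\circ$ and $\tau_\bullet$ on the root lattice $Q$ of $\Lambda$: $\tau_\circ$ negates a simple root $\alpha_i$ with $\epsilon(i)=\circ$, and acts on the other coordinates as the simple reflections at white vertices (dually for $\tau_\bullet$). Then I track the denominator vectors along the bipartite belt $\mu_\circ\mu_\bullet\mu_\circ\cdots$. An induction on the length of the belt, using the exchange relation \eqref{mutationX} and the recurrence $\mu_\circ\mu_\bullet(B)=B$, should show that each mutation step transforms denominator vectors exactly by $\tau_\circ$ or $\tau_\bullet$. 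The key input is the tropical identity
\[ d(x_k') = -d(x_k) + \max\Bigl(\sum_{b_{ik}>0} |b_{ik}|\, d(x_i),\ \sum_{b_{ik}<0} |b_{ik}|\, d(x_i)\Bigr). \]
This identity holds as long as no cancellation occurs in the numerator. That is guaranteed because numerators have positive coefficients and nonzero constant term, which I prove simultaneously in the same induction.

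\textbf{Step 2 (covering and uniqueness).} Next I invoke the Coxeter-theoretic facts about the group $\langle\tau_\circ,\tau_\bullet\rangle$: it is dihedral of order $h+2$ or $2(h+2)$, and every $\langle \tau_\circ,\tau_\bullet\rangle$-orbit in the almost positive roots $\Phi_{\ge -1}$ meets $-\Pi$. Hence the belt produces a variable $x[\alpha]$ for every $\alpha\in\Phi_{\ge -1}$. To show that these are all the cluster variables, I introduce the compatibility degree $(\alpha\|\beta)$, which is $\tau$-invariant. I check that clusters correspond to maximal compatible subsets, and that every mutation of a seed built from compatible roots lands on another such seed. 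Uniqueness of the bijection then follows because distinct cluster variables have distinct denominator vectors: two variables with the same denominator vector are carried by the $\tau$-action to initial variables, which coincide.

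\textbf{Step 3 (coefficients).} For the statement about coefficients, I use the separation formula $x = \frac{F(\hat y)}{F|_{\mathbb P}(y)}\,x^{g}$. The polynomial $P_\alpha$ then has coefficients of the form monomial times (a product of the $y_i$) over $F|_{\mathbb P}(y)$. Positivity of the $F$-polynomials in finite type (from the explicit finite-type formulas, or Laurent positivity) gives the positivity claim, after rewriting the $y$-monomials in terms of the elements of $\mathcal Y$.

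\textbf{Step 4 (arbitrary seeds) and the main obstacle.} The hardest step is passing from a bipartite seed to an arbitrary seed $(B,x,y)$ of the finite-type algebra. Here the denominator vectors are no longer given by $\tau$-orbits, and one needs the statement that the denominator vector of a non-initial variable is still a positive root for the appropriate root system. My first attempt would be to use a chain of mutations from a bipartite seed, together with the compatibility-degree description $d(x[\beta])_i = (\alpha_i\|\beta)$. Then I would prove that this compatibility interpretation is stable under mutation of the reference cluster, checking the finite-type compatibility tables case by case if necessary.
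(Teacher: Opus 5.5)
The paper does not prove this statement; it quotes Fomin--Zelevinsky. Their hypothesis is that the initial seed is \emph{bipartite} with Cartan counterpart of type $\Lambda$, and that is the only case the paper uses (the seed of $\Lambda\otimes A_1$ in the last corollary). Your Step~4 cannot be carried out, because for an arbitrary seed the statement is false. Already for the acyclic quiver $1\to2\to3$ of type $A_3$, mutating at $2$ gives $(x_1+x_3)/x_2$. For no $\alpha$ can this be written as $P/x^{\alpha}$ with $P$ a polynomial having nonzero constant term. For the oriented $3$-cycle $1\to2\to3\to1$, the non-initial cluster variables are $(x_j+x_k)/x_i$ and $(x_1+x_2+x_3)/(x_ix_j)$, with denominator vectors $e_i$ and $e_i+e_j$. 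Nothing has denominator $x_1x_2x_3$, and the three vectors $e_i+e_j$ can never all be roots of $A_3$. The compatibility-degree description of $d$-vectors relative to an arbitrary cluster, which you propose to establish, is true (and hard), but it outputs exactly these non-root vectors. So Step~4 should be replaced by the bipartite hypothesis, not proved.

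Your Steps 1--3 follow the Fomin--Zelevinsky outline ($\tau_\pm$, the dihedral group, orbits meeting $-\Pi$, compatibility degree), but Step~1 has two defects.

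First, a belt step does not act on denominator vectors by $\tau_\circ$ or $\tau_\bullet$. Take linear $A_3$ with $\circ=\{1,3\}$ mutated first. The belt clusters are $C_1=\{\alpha_1,-\alpha_2,\alpha_3\}$ and $C_2=\{\alpha_1,\alpha_1+\alpha_2+\alpha_3,\alpha_3\}$. But $\tau_\bullet(C_1)=\{\alpha_1+\alpha_2,\alpha_2,\alpha_2+\alpha_3\}$ and $\tau_\circ(C_1)=-\Pi$. The correct mechanism is a change of \emph{reference} cluster: a variable with $d$-vector $\beta$ relative to the bipartite cluster $\mu_\varepsilon(x)$ has $d$-vector $\tau_\varepsilon(\beta)$ relative to $x$. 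Hence $C_t=\tau_{\varepsilon_1}\cdots\tau_{\varepsilon_t}(-\Pi)$, with the newest factor innermost.

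Second, your simultaneous induction on positivity does not close. Positivity does not survive the exact division by the previous numerator $P_{k;t-1}$; compare $(x^3+1)/(x+1)=x^2-x+1$. A nonzero constant term is not preserved by the max-recurrence in general either: if the maximum is attained by different monomials in different coordinates, both terms of the cleared numerator vanish at $x=0$. What saves the bipartite belt is structural. Every belt exchange has one trivial monomial. The neighbours of the mutated vertex are either all initial or all non-initial, since negative simple roots occur only at the start and at the half-period. So the new numerator $N$ has $N(0)\neq0$. Because $P_{k;t-1}(0)\neq0$, the quotient $N/P_{k;t-1}$ (exact by the Laurent phenomenon) keeps a nonzero constant term. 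Positivity is not needed for this part.

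Finally, in Step~3 you assert that each $y^{m}/F|_{\mathbb P}(y)$ can be rewritten as a positive polynomial in elements of $\mathcal Y$. That is the entire content of the last sentence of the statement, and you give no argument for it.
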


\begin{remark}
    In particular, when we specialize at $y_i = 1, \forall i\in [n]$ with principal coefficients, we recover a cluster algebra (without coefficients), and the bijection above becomes $$\alpha\mapsto \frac{P_{\alpha}(x)}{x^{\alpha}},$$
    where $P_{\alpha}(x)$ is a polynomial in $\mathbb{Z}_{\geq 0}[x]$ with nonzero constant term.
\end{remark}

\subsection{Maximal green sequences}

Given a skew-symmetrizable $n\times n$ matrix $B$, the associated \textit{framed matrix} is the $n\times 2n$ extended matrix $\hat{B} = \begin{bmatrix}
    B & I
\end{bmatrix}$.
One can think of this as the exchange matrix with an extra \textit{frozen} vertex $i'$ and directed edge $i\rightarrow i'$ for each mutable vertex $i\in [n]$.

Similarly, the \textit{coframed matrix} is the $n\times 2n$ extended matrix $\check{B} = \begin{bmatrix}
    B & -I
\end{bmatrix}$.

Mutation is not allowed at any frozen vertices.
A mutable vertex $i$ is \textit{green} if $b_{ij}\geq 0$ for all frozen vertices $j\in [n]$.
A mutable vertex $i$ is \textit{red} if $b_{ij}\leq 0$ for all frozen vertices $j\in [n]$.
Notice all mutable vertices are green in the framed matrix $\hat{B}$, and all mutable vertices are red in the coframed matrix $\check{B}$.
In particular, if we start from the framed matrix and apply any sequence of mutations, each mutable vertex will either be green or red.
This result is known as \textit{sign-coherence}, and was proved in full generality in \cite{sign-coherence}.
Every seed of a framed matrix has an associated \textit{$C$-matrix} made up of \textit{$c$-vectors} $c_i$ which encode the adjacency relations between a mutable vertex $i$ and each frozen vertex.
Thus in this terminology, the sign-coherence result states that each $c$-vector $c_i$ either lives in $\mathbb{Z}_{\geq 0}^n$ or in $\mathbb{Z}_{\leq 0}^n$. 

The following result presents a powerful relation between coefficients and $c$-vectors.
\begin{theorem}[Separation Formula \cite{fomin4}]
\label{separation}
    Let $y = (y_i)_{i\in I}$ be the coefficients of the initial seed.
    Let $y'$, $C' = (c_{ij}')$, and $F_j'$ be the coefficients, $C$-matrix, and $F$-polynomials of a seed $\Sigma'$.
    Then,
    \[y_i' = \left(\prod_{j\in I}y_j^{c_{ji}'}\right)\prod_{j\in I}F_j'(y)_{\oplus}^{b_{ji}'}.\]
\end{theorem}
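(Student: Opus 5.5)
The plan is to reduce to principal coefficients, prove the identity there by induction along the mutation sequence from the initial seed to $\Sigma'$, and then transport it to an arbitrary semifield $\mathbb{P}$ by a semifield homomorphism.

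\textbf{Universal case.} First I would treat the coefficients as elements of the universal semifield $\mathbb{Q}_{sf}(y)$. Running the coefficient mutation rule \eqref{mutationY} there gives, for each seed, subtraction-free rational functions $Y_i'(y)\in\mathbb{Q}_{sf}(y)$. For an arbitrary semifield $\mathbb{P}$ with initial coefficients $(y_j)$, the map $\varphi:\mathbb{Q}_{sf}(y)\to\mathbb{P}$ sending each $y_j$ to the corresponding initial coefficient is a semifield homomorphism. Since \eqref{mutationY} uses only $\cdot$, $\oplus$ and inverses, $\varphi$ commutes with mutation, so $y_i'=\varphi(Y_i'(y))$. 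Hence it suffices to prove
\[Y_i' = y^{c_i'}\prod_j F_j'(y)^{b_{ji}'}\]
as an identity in $\mathbb{Q}_{sf}(y)$; applying $\varphi$ turns ordinary sums into $\oplus$ and gives $F_j'(y)_\oplus$. Here I use the standard conventions:
\begin{itemize}
\item $c_i'$ is the exponent vector of the tropicalization of $Y_i'$ in $\mathrm{Trop}(y)$, i.e.\ the $c$-vector of the framed seed;
\item $F_j'$ is the principal-coefficient cluster variable $x_j'$ specialized at $x=1$.
\end{itemize}

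\textbf{Induction.} The base case is the initial seed: $C=I$ and $F_j=1$, so both sides equal $y_i$. For the inductive step, mutate at $k$. I would record the three recurrences to be used:
\begin{itemize}
\item The $c$-vectors satisfy $c_k''=-c_k'$ and $c_i''=c_i'+[b_{ki}']_+c_k'+b_{ki}'[-c_k']_+$ for $i\neq k$. This comes from tropicalizing \eqref{mutationY}, using $1\oplus y^{c}=y^{-[-c]_+}$ in $\mathrm{Trop}(y)$.
\item The $F$-polynomials satisfy $F_j''=F_j'$ for $j\ne k$, and
\[F_k''F_k'=y^{[c_k']_+}\prod_j F_j'^{[b_{jk}']_+}+y^{[-c_k']_+}\prod_j F_j'^{[-b_{jk}']_+}.\]
This is obtained from \eqref{mutationX} with principal coefficients, using the inductive form of $y_k'$ in $\mathrm{Trop}(y)$ and setting $x=1$.
\item The matrix $B'$ changes by matrix mutation.
\end{itemize}
The target is to verify
\[Y_i''=Y_i'\,(Y_k')^{[b_{ki}']_+}(1+Y_k')^{-b_{ki}'}.\]

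\textbf{Key computation and main obstacle.} The delicate step is rewriting $1+Y_k'$. By induction,
\[1+Y_k' = 1+y^{c_k'}\prod_j F_j'^{b_{jk}'}.\]
Multiplying through by $y^{[-c_k']_+}\prod_j F_j'^{[-b_{jk}']_+}$ and using $c=[c]_+-[-c]_+$ (and likewise for $b_{jk}'$), this becomes
\[1+Y_k' = \frac{F_k''F_k'}{y^{[-c_k']_+}\prod_j F_j'^{[-b_{jk}']_+}}.\]
Substituting this, together with the inductive formulas for $Y_i'$ and $Y_k'$, into the right-hand side of the target, I would compare exponents in two groups:
\begin{itemize}
\item \emph{Exponents of $y$.} These should reproduce the recurrence for $c_i''$. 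This is immediate once the case split on the sign of $b_{ki}'$ is made.
\item \emph{Exponents of each $F_j'$ and of $F_k''$.} These should reproduce $b_{ji}''$ from matrix mutation. I expect this to be the main obstacle. It requires skew-symmetrizability ($b_{ki}'$ and $b_{ik}'$ have opposite signs) and a case analysis on the signs of $b_{jk}'$ and $b_{ki}'$ to match the rule $b_{ji}''=b_{ji}'+\mathrm{sgn}(b_{jk}')[b_{jk}'b_{ki}']_+$. The exponent of $F_k'$ cancels to $-b_{ki}'=b_{ki}''$, which falls on $F_k''$ as required.
\end{itemize}
Note that sign-coherence is not needed for this argument, because the tropical identity $1\oplus y^{c}=y^{-[-c]_+}$ holds for every integer vector $c$.
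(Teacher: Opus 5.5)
The paper does not prove this statement. It is quoted from \cite{fomin4}, and only its principal-coefficient specialization $y_i'=y^{c_i'}$ is used later. Your proposal is a correct, self-contained proof. The universal property of $\mathbb{Q}_{sf}(y)$ legitimately reduces everything to the universal semifield. The induction also goes through: it uses the tropical $c$-vector recurrence, the $F$-recurrence obtained from the principal-coefficient exchange relation at $x=1$, and matrix mutation. Your rewriting of $1+Y_k'$ as $F_k''F_k'\big/\bigl(y^{[-c_k']_+}\prod_j F_j'^{[-b_{jk}']_+}\bigr)$ is the key step.

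A common alternative packaging works inside the principal-coefficient algebra with $\hat y_j=y_j\prod_i x_i^{b_{ij}}$. One checks once that these mutate by the coefficient rule with ordinary addition, so $\hat y_i'=Y_i'(\hat y)$. By definition $\hat y_i'=y^{c_i'}\prod_j (x_j')^{b_{ji}'}$, and setting all $x_j=1$ then gives the formula without writing a separate $F$-recurrence. That route is shorter to state; yours makes every exponent explicit.

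A few details to tighten:

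\textbf{The step you flag as the main obstacle needs no case analysis.} For $j\neq k$ the exponent of $F_j'$ is $b_{ji}'+[b_{ki}']_+b_{jk}'+b_{ki}'[-b_{jk}']_+$. Since $[a]_+b+a[-b]_+=\tfrac12(|a|b+a|b|)$ is symmetric in $a,b$, this equals $b_{ji}'+[b_{jk}']_+b_{ki}'+b_{jk}'[-b_{ki}']_+=b_{ji}''$ identically. So $b_{ik}'$ never enters, and its sign relation to $b_{ki}'$ is irrelevant; the only input from skew-symmetrizability is $b_{kk}'=0$. Likewise the $y$-exponent is the $c$-recurrence verbatim, with no case split.

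\textbf{The $k$-th factor.} The exponent of $F_k'$ is $b_{ki}'-b_{ki}'=0$, and it is $F_k''$ that receives $-b_{ki}'=b_{ki}''$. Your sentence runs these two together.

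\textbf{The case $i=k$.} Record it too. It is immediate from $c_k''=-c_k'$, $b_{jk}''=-b_{jk}'$, $F_j''=F_j'$ for $j\neq k$, and $b_{kk}'=b_{kk}''=0$.

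\textbf{Applying $\varphi$.} Note that $F_j'\in\mathbb{Q}_{sf}(y)$ because your recurrence builds it from $1$ using only $+$, $\cdot$ and division. So no positivity theorem for $F$-polynomials is needed, and $\varphi(F_j')$ is by definition $F_j'(y)_\oplus$.

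\textbf{The framed-seed parenthetical.} Identifying your tropical $c_i'$ with ``the $c$-vector of the framed seed'' is convention-dependent. The frozen block of the paper's $\hat B$ (frozen columns) mutates by the $c$-vector recurrence of $B^{\top}$, not of $B$. Your argument never uses that identification, so simply drop it.
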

In particular, $F$-polynomials are polynomials with positive integer coefficients and constant term 1.
So if we are working in the tropical semifield Trop($y$) (i.e. with principal coefficients), all $F$-polynomials evaluate to $F_j(y) = 1$, and we are left with $y_i' = y^{c_i'}$, where $c_i'$ is the $i$-th $c$-vector of $C'$.

We say a mutation sequence $k = k_1k_2\dots k_{\ell}$ is \textit{green} if every vertex $k_i$ is green at the time of mutation.
A green sequence $k$ is \textit{maximal} if after applying $\mu_k$, all mutable vertices are red.

The following result states that the bipartite belt is in fact a maximal green sequence for simply-laced Dynkin diagrams $A_n, D_n,$ and $E_n$.

\begin{proposition}[\cite{keller, keller2}]
\label{kellerMaximalGreen}
    Let $Q$ be an alternating ADE quiver with Coxeter number $h$. Let $i_+$ denote the set of vertices that are sources, and $i_-$ denote the set of vertices that are sinks. Then, $i_+i_-$ is a maximal green sequence, and $i_-i_+i_-\dots$ ($h$ factors) is a maximal green sequence.
\end{proposition}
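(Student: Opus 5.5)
Both claims come down to tracking $c$-vectors in the framed quiver $\hat B=\begin{bmatrix}B & I\end{bmatrix}$. I use two facts. First, mutating at a vertex $k$ only changes arrows incident to $k$, plus arrows coming from length-two paths through $k$. Second, if $k$ has no incoming arrows in the framed quiver (including frozen vertices), then $\mu_k$ only reverses the arrows at $k$. I read "$i_+i_-$" as: first mutate at all sources (the set $i_+$), then at all sinks (the set $i_-$); this sequence has length $n$. For the second claim I read "$i_-i_+i_-\dots$ ($h$ factors)" as starting with the sinks and alternating bipartite mutations, $h$ times in total.

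\emph{The two-step sequence $i_+i_-$.} In $\hat B$ every frozen arrow points $i\to i'$. So a mutable source $k\in i_+$ has no incoming arrows at all in the framed quiver. Mutation at $k$ therefore creates no new arrows and simply reverses every arrow at $k$. In particular $c_k$ becomes $-e_k$, and no other $c$-vector changes. Since $Q$ is alternating, distinct sources are not adjacent, so the same reasoning applies to each source in turn.

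After mutating all of $i_+$, consider an old sink $k\in i_-$. Its only incoming arrows came from sources, and these have all been reversed. Its frozen arrow $k\to k'$ is still outgoing, and $c_k=e_k$ has not changed, so $k$ is still green. Hence $k$ is now a source of the whole framed quiver. Mutating it again only reverses its arrows, turning $c_k$ into $-e_k$ and leaving the others unchanged. At the end every $c$-vector is $-e_i$. So each mutation was performed at a green vertex, and afterwards all mutable vertices are red: the sequence $i_+i_-$ is maximal green. The $A_2$ example $1\to 2$ confirms the order matters: mutating the source $1$ and then $2$ ends all red.

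\emph{The $h$-factor sequence $i_-i_+i_-\cdots$.} Here I would use the tropical $Y$-system. By Theorem~\ref{separation} with principal coefficients, the coefficients are $y^{c_i}$. So the $c$-vectors along the bipartite belt obey the tropicalized exchange relation \eqref{mutationY}, which is Fomin--Zelevinsky's piecewise-linear action of $\tau_\pm$ on the root lattice. Starting from the simple roots, I would show by induction that after $t<h$ bipartite steps the mutated vertices carry positive roots. By sign-coherence these vertices are green, and applying $\tau_\pm$ to a positive non-simple root stays positive. Each positive root appears exactly once; this uses the count $nh/2=|\Phi_+|$ and the fact that $\tau_-\tau_+$ acts as a Coxeter element of order $h$ on almost positive roots. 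After the $h$-th factor every $c$-vector is a negative simple root, so all vertices are red.

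The main obstacle is exactly this identification of the tropical dynamics with $\tau_\pm$ and the control of signs up to step $h$ (the $\sigma$-twist by $w_0$ at the end). I would handle it via Fomin--Zelevinsky's description of the bipartite belt for finite Cartan matrices in \cite{fomin4}.
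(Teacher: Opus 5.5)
The $i_+i_-$ half of your proposal is complete and correct; the paper gives no proof of this proposition and only quotes Keller. The gap is in the $h$-factor half, exactly at the step you defer, and the identification you propose there is wrong.

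\textbf{Why the $\tau_\pm$ identification fails.} At each step of $i_-i_+i_-\cdots$ the mutated vertices $k$ are the sinks of the current mutable quiver, which alternates between $Q$ and $Q^{\mathrm{op}}$. Hence $b_{ik}\ge 0$ for all $i$. Mutating $\hat B$ then gives $c_k\mapsto -c_k$ and $c_i\mapsto c_i+b_{ik}[c_k]_+$, where $[c_k]_+$ is $c_k$ if $k$ is green and $0$ if red. (Read this off from $\hat B$ directly: \eqref{mutationY} is written in the column convention, and plugging the row-convention $B$ into it literally swaps the behaviour of sources and sinks.)
\begin{itemize}
\item This rule moves the whole tuple $(c_1,\dots,c_n)$ by adding mutated vectors to their neighbours. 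It is not $\tau_\pm$ applied to each $c_i$, since $\tau_\varepsilon$ rewrites the coordinates of a single root at the vertices of colour $\varepsilon$.
\item Coordinatewise the rule is the tropical $Y$-system recursion. So $\tau_\pm$ governs the rows $(c_{1j},\dots,c_{nj})$ of $C$ (at staggered times), not the columns whose signs decide greenness.
\item Already for $A_2$ with arrow $2\to 1$, the $c$-vector of vertex $1$ under $\mu_1,\mu_2,\mu_1$ runs $e_1,-e_1,e_2,-e_2$. Iterating $\tau_\pm$ on $\alpha_1$ instead gives $\alpha_1,-\alpha_1,-\alpha_1,\alpha_1$.
\item $\tau_-\tau_+$ does not have order $h$ on $\Phi_{\ge -1}$. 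By Fomin--Zelevinsky its order there is $(h+2)/2$ or $h+2$; order $h$ belongs to the linear Coxeter element $t_-t_+$.
\item The count $nh/2=|\Phi_+|$ gives neither positivity nor distinctness.
\end{itemize}
So your claims that every mutated vertex is green, and that all $c$-vectors are negative simple roots after $h$ factors, are unsupported.

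\textbf{The missing idea.} A linear Weyl-group description makes $\tau_\pm$ unnecessary. If $k$ is green, the rule reads $c_k\mapsto -c_k$ and $c_i\mapsto c_i-a_{ik}c_k$. Hence if $c_i=w(\alpha_i)$ for all $i$ beforehand, then $c_i=ws_k(\alpha_i)$ for all $i$ afterwards.

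By induction, as long as every mutated vertex is green, after $t$ factors $c_i=w_t(\alpha_i)$. Here $w_t=t_-t_+t_-\cdots$ ($t$ factors) and $t_\pm=\prod_{k\in i_\pm}s_k$. The next mutated vertex $k$ is green if and only if $w(\alpha_k)>0$, where $w$ is the current prefix; equivalently $\ell(ws_k)>\ell(w)$. The letters within one factor commute, so this is well defined.

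All these length conditions hold because $t_-t_+t_-\cdots$ ($h$ factors) is a reduced word for $w_0$ (Bourbaki, Ch.~V, \S6, Ex.~2). At the end $c_i=w_0(\alpha_i)=-\alpha_{\sigma(i)}$, with $\sigma$ the involution induced by $-w_0$, so every vertex is red. The green $c$-vectors met along the way are the inversions of $w_0$, i.e.\ each positive root exactly once. This is what your counting argument was aiming for.
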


In fact, as presented in \cite{preserving}, given several ``local" maximal green sequences, one may be able to build a ``global" maximal green sequence by \textit{shuffling} together these smaller sequences.

A \textit{shuffle }of two sequences $(a_1, a_2, \dots, a_k)$ and $(b_1, b_2, \dots, b_{\ell})$ is any sequence whose elements are $\{a_1, \dots, a_k\}\cup \{b_1, \dots, b_{\ell}\}$ which preserves the relative order of $(a_1, a_2, \dots, a_k)$ and $(b_1, b_2, \dots, b_{\ell})$.
A \textit{partitioned matrix} $(B, \pi)$ is a partition $\pi = \pi_1 / \pi_2 / \cdots \pi_{\ell}$ of the vertex set of $B$, along with components $B_i$ of $B$ induced by taking the submatrix corresponding to the vertices $\pi_i$.

\begin{definition}
\label{preservingDef}
    Let $(B, \pi)$ be a partitioned matrix.
    A vertex $k\in \pi_i$ is \textit{component preserving} with respect to $\pi$ if one of the following occurs:
    \begin{itemize}
        \item[(1)]
            If $k$ is green in $B$, then $b_{kj} < 0$ implies $j\in \pi_i$.
        \item[(2)]
            If $k$ is red in $B$, then $b_{kj} > 0$ implies $j\in \pi_i$.
    \end{itemize}
\end{definition}

One can think of this as being locally sign-coherent.
A mutation $\mu_k$ at vertex $k$ is component preserving if $k$ is component preserving.
One important property of component preserving mutations with respect to a partition $\pi$ is that they commute with applying $\pi$:
\begin{proposition}
\label{preservingProp}
    Let $(B, \pi)$ be a partitioned skew-symmetric matrix.
    If $k$ is a component preserving vertex, then $\mu_k(B)_i = \mu_k(B_i)$, $\forall i\in [\ell]$.
\end{proposition}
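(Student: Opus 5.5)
Since both $B$ and $\mu_k(B)$ are determined entry by entry, I will check the identity $\mu_k(B)_i=\mu_k(B_i)$ entry by entry on the vertex set $\pi_i$, working with the extended (framed) matrix. The frozen vertex $j'$ is assigned to the component containing $j$, so $B_i$ carries its own frozen columns. Because mutation at $k$ is a local rule, every entry $b'_{ab}$ of $\mu_k(B)$ depends only on $b_{ab}$, $b_{ak}$ and $b_{kb}$.

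\textbf{Case $k\notin\pi_i$.} Here $\mu_k(B_i)$ should be read as $B_i$. For $a,b\in\pi_i$ the entry $b_{ab}$ changes only when $b_{ak}b_{kb}<0$ in the sign pattern of the mutation rule, that is, when $b_{ak}$ and $b_{kb}$ have the same sign and both are nonzero.

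\textbf{Case $k\in\pi_i$.} Here $B_i$ contains $k$, and the entries $b_{ak},b_{kb}$ it uses are the same as in $B$. So $\mu_k(B)_{ab}=\mu_k(B_i)_{ab}$ holds trivially for $a,b\in\pi_i$.

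The real content is therefore the first case: for $k\in\pi_j$ and $a,b\notin\pi_j$, we must show that $b_{ak}>0$ and $b_{kb}>0$ cannot both hold, and likewise with both negative.

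\textbf{Using component preservation.} Suppose $k$ is green. Condition (1) says every $b$ with $b_{kb}<0$ lies in $\pi_j$. By skew-symmetry, $b_{ak}>0$ means $b_{ka}<0$, which forces $a\in\pi_j$. Hence for $a\notin\pi_j$ we have $b_{ak}\le 0$. Then the pair $b_{ak}>0,\ b_{kb}>0$ is impossible. The pair $b_{ak}<0,\ b_{kb}<0$ is also impossible, since $b_{kb}<0$ forces $b\in\pi_j$. The red case is symmetric using condition (2).

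\textbf{Frozen entries.} I also need the entries between mutable vertices and frozen columns. For a mutable $a\notin\pi_j$ and frozen $b'$, the entry changes by an amount involving $b_{ak}$ and $b_{kb'}$. The frozen column $b'$ either belongs to $\pi_j$'s frame, in which case it is outside component $i$ and irrelevant, or the sign-coherence of $k$ handles it. For example, if $k$ is green then $b_{kb'}\ge 0$ and $b_{ak}\le0$, so no change occurs. This edge case is the main thing to be careful about: the definition of $B_i$ must be made to include the frozen vertices consistently.
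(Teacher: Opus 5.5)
Your entrywise argument is correct and is the standard proof. The paper gives no proof of its own here (it defers to \cite{preserving}), and the content is exactly your observation: for a green (resp.\ red) component-preserving $k\in\pi_j$ one has $b_{ak}\le 0\le b_{kb}$ (resp.\ $b_{ak}\ge 0\ge b_{kb}$) whenever $a,b\notin\pi_j$, frozen columns included, so no entry of another component changes; and for $k\in\pi_i$, restricting to a full subquiver containing $k$ commutes with $\mu_k$ trivially.

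One slip to fix: in the case $k\notin\pi_i$ you wrote that $b_{ab}$ changes when $b_{ak}b_{kb}<0$. It should be $b_{ak}b_{kb}>0$, which is what your gloss (same sign, both nonzero) and the rest of your argument actually use.
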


\begin{theorem}[\cite{preserving}]
\label{preservingThm}
    Let $(\hat{B}, \hat{\pi})$ be a partitioned framed skew-symmetric matrix such that for each $\hat{B}_i$, there exists a maximal green sequence $\sigma_i$. Let $\tau$ be a shuffle of $\{\sigma_i\}_{i\in I}$ such that at each mutation step $\mu_k$, $k$ is component preserving with respect to $\pi$. Then, $\tau$ is a maximal green sequence of $\hat{B}$.
\end{theorem}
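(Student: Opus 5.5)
The plan is to show that the whole framed matrix, along the shuffle $\tau$, splits into the separate components $\hat{B}_i$, so that each $\sigma_i$ runs inside its own component exactly as it would alone. I take $\hat{\pi}$ to be the partition of the framed vertex set obtained from $\pi$ by placing each frozen vertex $j'$ in the same block as its mutable vertex $j$. Write $\tau=(k_1,\dots,k_N)$ and $\hat{B}^{(s)}=\mu_{k_s}\cdots\mu_{k_1}(\hat{B})$.

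I would prove the following two invariants together by induction on $s$.
\begin{itemize}
\item[(a)] For every $i$, the restriction $\hat{B}^{(s)}_i$ equals $\hat{B}_i$ mutated along the prefix of $\sigma_i$ that appears among $k_1,\dots,k_s$.
\item[(b)] For every mutable $m\in\pi_i$ and every frozen $j'$ with $j\notin\pi_i$, we have $b^{(s)}_{mj'}=0$. In other words, every $c$-vector is supported on the frozen vertices of its own block.
\end{itemize}
Both hold at $s=0$, since $\hat{B}$ is framed and the frozen part is the identity.

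For the inductive step, let $k=k_{s+1}\in\pi_i$; by hypothesis it is component preserving. Invariant (a) follows directly from Proposition~\ref{preservingProp}: mutating at $k$ changes the blocks $\hat{B}_{i'}$ with $i'\neq i$ not at all (they are not hit by $\sigma_{i'}$ at this step), and changes $\hat{B}_i$ by $\mu_k$. For (b), assume $k$ is green; the red case is symmetric. Consider the entries of the frozen columns $j'$ after the mutation.
\begin{itemize}
\item If $m\in\pi_i$ and $j\notin\pi_i$, then $b_{kj'}=0$ by (b), so $b_{mj'}$ is unchanged and stays $0$.
\item If $m\notin\pi_i$ and $j\in\pi_i$, then $b_{mj'}$ changes only when $b_{mk}$ and $b_{kj'}$ are both positive. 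Since $k$ is green, $b_{kj'}\ge 0$, so this requires $b_{mk}>0$. By skew-symmetry that means $b_{km}<0$, and component preservation then forces $m\in\pi_i$, a contradiction. So this entry is unchanged as well.
\end{itemize}

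Granting (b), the $c$-vector of a vertex $k\in\pi_i$ in $\hat{B}^{(s)}$ is just its $c$-vector in $\hat{B}^{(s)}_i$ padded by zeros. Hence $k$ is green (or red) in the full matrix exactly when it is green (or red) in its component. By (a), the subsequence of $\tau$ lying in $\pi_i$ is precisely $\sigma_i$ acting on $\hat{B}_i$. Since $\sigma_i$ is a green sequence there, every step of $\tau$ is green. At the end, $\sigma_i$ is maximal for $\hat{B}_i$, so every vertex of every block is red, and therefore $\tau$ is a maximal green sequence of $\hat{B}$.

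The main obstacle is invariant (b). Proposition~\ref{preservingProp} only controls the diagonal blocks, and without (b) the colors computed in the whole matrix and in a component could disagree. I expect the sign analysis above, which uses component preservation together with sign-coherence, to be exactly what closes this gap.
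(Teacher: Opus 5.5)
Your proof is correct. The paper does not prove this statement itself: it is quoted from \cite{preserving}, and Proposition~\ref{preservingProp2} likewise just defers to that source. So there is no in-paper argument to match. The only ingredient the paper states is Proposition~\ref{preservingProp}, and, as you observe, it controls only the diagonal blocks.

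Your invariant (b) is exactly the missing piece. It says the $c$-vectors stay supported on the frozen vertices of their own block; equivalently, $\hat{B}^{(s)}$ remains a direct sum of the mutated framed components, with cross-block entries only between mutable vertices. Your sign check for it is right. A correction term at $(m,j')$ with $j\in\pi_i$, $m\notin\pi_i$ needs $b_{mk}$ to have the same sign as $b_{kj'}$. That means an arrow $m\to k$ if $k$ is green, or $k\to m$ if $k$ is red, and component preservation forbids either for $m\notin\pi_i$.

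One small repair to your first bullet: do not restrict it to $m\in\pi_i$. Whenever $j\notin\pi_i$, (b) gives $b_{kj'}=0$. Hence $b_{mj'}$ is unchanged for every $m\neq k$, and for $m=k$ it is negated, so it stays $0$. This also covers pairs $m,j$ lying in two different blocks, both different from the block of $k$, which your case split omits.

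It is also worth knowing that (b) can be bypassed. Invariant (a) alone, whose proof via Proposition~\ref{preservingProp} does not use (b), shows that the restriction of $k$'s full $c$-vector to the frozen columns of $\pi_i$ is its $c$-vector in the component. That vector is nonzero. So sign-coherence of the full $c$-vector \cite{sign-coherence} already forces the full and component colors to agree, at every step of $\tau$ and at the end.

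Your route is more elementary and gives more. It never invokes sign-coherence for the big matrix: greenness and redness come directly from $\sigma_i$ being a maximal green sequence on the component. It also yields the stronger statement that the $C$-matrix stays block diagonal throughout. The sign-coherence route is shorter but rests on a deep theorem.
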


\subsection{T-systems and Y-systems}

Given a bipartite recurrent $B$-matrix, one can decompose it as a sum of two skew-symmetrizable matrices $B = \tilde{\Gamma} + \tilde{\Delta}$, $\tilde{\Gamma} = (\tilde{\Gamma}_{ij})$ and $\tilde{\Delta} = (\tilde{\Delta}_{ij})$, as follows:
\begin{align*}
    \tilde{\Gamma}_{ij} \coloneqq \begin{cases}
        b_{ij} & \text{if } b_{ij} > 0, \epsilon_i = \circ, \epsilon_j = \bullet,\\
        b_{ij} & \text{if } b_{ij} < 0, \epsilon_i = \bullet, \epsilon_j = \circ,\\
        0 & \text{otherwise.}
    \end{cases}\qquad
    \tilde{\Delta}_{ij} \coloneqq \begin{cases}
        b_{ij} & \text{if } b_{ij} > 0, \epsilon_i = \bullet, \epsilon_j = \circ,\\
        b_{ij} & \text{if } b_{ij} < 0, \epsilon_i = \circ, \epsilon_j = \bullet,\\
        0 & \text{otherwise.}
    \end{cases}
\end{align*}
Throughout the paper, we often use these matrices as unsigned matrices, so we define the matrices $\Gamma = (\Gamma_{ij})$ and $\Delta = (\Delta_{ij})$ by
$$\Gamma_{ij} \coloneqq |\tilde{\Gamma}_{ij}|, \hspace{5mm} \Delta_{ij}\coloneqq |\tilde{\Delta}_{ij}|$$
for all $i,j\in[n]$.
Notice that given a bipartite coloring $\epsilon$ and $(\Gamma, \Delta)$, one can recover $\tilde{\Gamma}, \tilde{\Delta}$.
We often use the abuse of notation $B = (\Gamma, \Delta)$ to refer to the skew-symmetrizable matrix associated to $(\Gamma, \Delta)$.

One can also associate to $B = \tilde{\Gamma} + \tilde{\Delta}$ a \textit{$Y$-system} and a \textit{$T$-system}, two infinite systems of algebraic equations, defined as follows.
Let $x = (x_1, \dots, x_n)$, and let $\mathbb{Q}(x)$ be the field of rational functions in these variables. The \textit{$T$-system} associated with $B$ is a family $T_k(t)$ of elements of $\mathbb{Q}(x)$ satisfying the following relations for all $k\in [n]$ and all $t\in \mathbb{Z}$:
\begin{align*}
    T_k(t+1)T_k(t-1) = \prod_{i} T_i(t)^{\Gamma_{ik}} + \prod_j T_j(t)^{\Delta_{jk}}.
\end{align*}

Similarly, the \textit{$Y$-system} associated with $B$ is a family $Y_k(t)$ of elements of $\mathbb{Q}(x)$ satisfying the following relations for all $k\in [n]$ and all $t\in \mathbb{Z}$:
\begin{align*}
    Y_k(t+1)Y_k(t-1) = \prod_{i} (1 + Y_i(t))^{\Delta_{ik}} + \prod_j (1 + Y_j(t)^{-1})^{-\Gamma_{jk}}
\end{align*}

At each time step, when $\epsilon_k = \circ$, $T_k(t+1)$ and $Y_k(t+1)$ are only dependent on black indices ($\epsilon_i = \bullet$).
Similarly when $\epsilon_k = \bullet$, $T_k(t+1)$ and $Y_k(t+1)$ are only dependent on white indices ($\epsilon_i = \circ$).
Because of this, both systems associated with $B$ split into two independent components.
For our purposes, we consider only one component.
From now on, assume that the two systems are defined only for $k\in [n]$ and $t\in \mathbb{Z}$ such that
\begin{equation}
\label{splitTsystem}
    \epsilon_k = \circ\quad\text{and}\quad t\equiv 0\hspace{-2mm}\pmod{2}\qquad \text{or} \qquad \epsilon_k = \bullet\quad \text{and} \quad t\equiv 1\hspace{-2mm}\pmod{2}.
\end{equation}
Both systems are set to the following initial conditions:
\begin{align*}
    T_k(0) = Y_k(0) = x_k\quad \text{and}\quad T_k(1) = Y_k(1) = x_k, \quad\text{for all } k \text{ satisfying } (\ref{splitTsystem}).
\end{align*}
Notice that the $T$-system relations are exactly the same as the exchange relations (\ref{mutationX}) for cluster mutation in a bipartite recurrent setting, when specialized at $y_i = 1$. In particular, this initialization will result in the same sequence of cluster variables as the mutation sequence $\mu_{\circ}\mu_{\bullet}\mu_{\circ}\mu_{\bullet}\dots$ acting on the initial seed $(B, x)$.
We say the $T$-system or $Y$-system is \textit{periodic} if there exists some positive integer $N$ such that $T_k(t + 2N) = T_k(t)$ for all $k\in[n], t\in\mathbb{Z}$. 
\begin{remark}
In particular, a bipartite recurrent matrix $B$ is Zamolodchikov periodic if its associated $T$-system or $Y$-system is periodic.
The original formulation of Zamolodchikov periodicity was in terms of $Y$-systems.
\end{remark}

\subsection{Tropical $T$-systems}
Given a bipartite recurrent $B$-matrix, we define an analogous system of algebraic equations.

Let $\lambda\in\mathbb{R}^n$ be a labeling of the $n$ indices.
The \textit{tropical $T$-system} $\boldsymbol{t}^{\lambda}$ associated with $B$ is a family $\boldsymbol{t}_k^{\lambda}(t)\in\mathbb{R}$ of real numbers satisfying the following  relations for all $k\in [n]$ and all $t\in\mathbb{Z}$:
\begin{align*}
    \boldsymbol{t}_k^{\lambda}(t + 1) + \boldsymbol{t}_k^{\lambda}(t - 1) = \max\left(\sum_i \Gamma_{ik}\boldsymbol{t}_i^{\lambda}(t), \sum_j \Delta_{jk}\boldsymbol{t}_j^{\lambda}(t)\right).
\end{align*}
Again, we only consider the subsystem defined for $k\in [n]$ and $t\in\mathbb{Z}$ satisfying (\ref{splitTsystem}).
The tropical $T$-system is set to the following initial conditions:
\begin{align*}
    \boldsymbol{t}_k^{\lambda}(0) = \lambda_k\quad\text{and}\quad \boldsymbol{t}_k^{\lambda}(1) &= \lambda_k, \quad \text{for all } k \text{ satisfying } (\ref{splitTsystem}).
\end{align*}
In other words, this system is the \textit{tropicalization} of the $T$-system associated with $B$.
We say the associated tropical $T$-system $\boldsymbol{t}^{\lambda}$ is \textit{periodic} if there exists a positive integer $N$ such that $\boldsymbol{t}_k^{\lambda}(t + 2N) = \boldsymbol{t}_k^{\lambda}(t)$ for all $k\in[n], t\in\mathbb{Z}$.

\begin{remark}
The tropicalization of the $Y$-system can be defined similarly, and its relations exactly model the progression of $c$-vectors, or principal coefficients.
\end{remark}



The following theorem is due to Inoue--Iyama--Keller--Kuniba--Nakanishi.
\begin{theorem}[Periodicity Theorem \cite{inoue}]
\label{periodicity}
    Let $B$ be an arbitrary skew-symmetric matrix, and let $Q$ be the quiver corresponding to $B$.
    Let $(Q(t), x(t), y(t))$ be the seed at $t\in \mathbb{T}_n$ for the cluster algebra $\mathcal{A}(B, x, y)$.
    Let $\hat{y}$ denote the tropical evaluation of $y$ (i.e. the natural mapping from the universal semifield $\mathbb{Q}_{sf}(y)$ to the tropical semifield Trop($y$)).
    Suppose there exists $t\in \mathbb{T}_n$ and $\sigma\in S_n$ such that $\hat{y}_i(t) = \hat{y}_{\sigma(i)}$, $\forall i\in [n]$.
    Then, $x_i(t) = x_{\sigma(i)}$, $y_i(t) = y_{\sigma(i)}$, $\forall i\in [n]$ and $Q(t) = \sigma^{-1}(Q)$.
\end{theorem}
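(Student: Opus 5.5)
The plan is to pass from tropical data ($c$-vectors) to $g$-vectors by duality, then to cluster variables, the exchange matrix, and finally general coefficients via the separation formulas. Throughout, work with principal coefficients first and then specialize.

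\textbf{Step 1: the $C$-matrix.} By Theorem~\ref{separation}, $\hat{y}_i(t)=y^{c_i(t)}$. The hypothesis $\hat{y}_i(t)=\hat{y}_{\sigma(i)}$ therefore says that the $C$-matrix at $t$ is the permutation matrix $P_\sigma$, i.e. $c_i(t)=e_{\sigma(i)}$.

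\textbf{Step 2: the $G$-matrix.} Invoke the tropical duality of Nakanishi--Zelevinsky. In the skew-symmetric case this reads $G(t)^{T}=C(t)^{-1}$, and it holds in full generality by sign-coherence~\cite{sign-coherence}. Hence $G(t)=P_\sigma$ as well, so $g_i(t)=e_{\sigma(i)}$.

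\textbf{Step 3: cluster variables.} For skew-symmetric $B$, a cluster variable is determined by its $g$-vector. This follows from Derksen--Weyman--Zelevinsky's theory of quivers with potentials, or from the theta-basis results of Gross--Hacking--Keel--Kontsevich; equivalently, distinct cluster monomials have distinct $g$-vectors. The initial cluster variable $x_{\sigma(i)}$ has $g$-vector $e_{\sigma(i)}$ and $F$-polynomial $1$. So $x_i(t)=x_{\sigma(i)}$ in the principal-coefficient algebra, and in particular $F_i(t)=1$ for all $i$.

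\textbf{Step 4: the exchange matrix.} Use the Nakanishi--Zelevinsky identity $G(t)B(t)=B\,C(t)$. Substituting $G(t)=C(t)=P_\sigma$ gives $B(t)=P_\sigma^{-1}BP_\sigma$, which is exactly $Q(t)=\sigma^{-1}(Q)$.

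\textbf{Step 5: arbitrary coefficients.} Return to an arbitrary semifield (universal or otherwise) using the separation formulas of~\cite{fomin4}.
\begin{itemize}
\item For coefficients, Theorem~\ref{separation} with $c_i(t)=e_{\sigma(i)}$ and all $F_j(t)=1$ gives $y_i(t)=y_{\sigma(i)}$ identically.
\item For cluster variables, the analogous formula writes $x_i(t)$ as $x^{g_i(t)}$ times $F_i(t)$ evaluated at $\hat y$-variables, divided by the tropical evaluation of $F_i(t)$. With $g_i(t)=e_{\sigma(i)}$ and $F_i(t)=1$, this yields $x_i(t)=x_{\sigma(i)}$.
\end{itemize}

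\textbf{Main obstacle.} The crux is Step~3, showing that a permutation-matrix $G$-matrix forces the cluster variables, and hence the $F$-polynomials, to coincide with the initial ones. Duality alone only controls tropical data. My first choice is to use that $F$-polynomials are determined by $g$-vectors for skew-symmetric $B$, via DWZ's decorated representations or the categorical approach of Plamondon and Keller. The identities in Steps~2 and~4 are formal once sign-coherence is available.
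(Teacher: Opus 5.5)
The paper gives no proof of this statement. It is quoted from \cite{inoue} and used as a black box in Proposition~\ref{ADE}, so there is no in-paper argument to match. Judged on its own, your proof is correct and complete modulo the theorems you cite:
\begin{itemize}
\item the hypothesis is exactly $C(t)=P_\sigma$;
\item Nakanishi--Zelevinsky duality, which needs sign-coherence (due to Derksen--Weyman--Zelevinsky in the skew-symmetric case), gives $G(t)=P_\sigma$;
\item Derksen--Weyman--Zelevinsky's theorem that distinct cluster monomials have distinct $g$-vectors gives $x_i(t)=x_{\sigma(i)}$ with principal coefficients, hence $F_i(t)=1$;
\item $G(t)B(t)=BC(t)$ gives $b_{ij}(t)=b_{\sigma(i)\sigma(j)}$;
\item the separation formulas finish.
\end{itemize}
Two small points. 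The identity $G(t)B(t)=BC(t)$ needs no sign-coherence; it only says that $y_j(t)\prod_i x_i(t)^{b_{ij}(t)}$ has degree $0$ in the principal grading. In Step~5 you should not call $y_j\prod_i x_i^{b_{ij}}$ ``$\hat y$-variables'', since the statement already uses $\hat y$ for tropical evaluation.

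For comparison, the original argument in \cite{inoue} is categorical. It relies on Plamondon's cluster categories for arbitrary skew-symmetric $B$. Roughly, tropical periodicity forces the reachable cluster-tilting object to return to the initial one up to permutation, and $x$, $y$ and $Q$ are read off from that object. Your route replaces the category by a chain of published combinatorial results, and the exchange-matrix step becomes a one-line matrix identity.

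It also generalizes at no cost. Gross--Hacking--Keel--Kontsevich supply sign-coherence and the fact that cluster monomials are theta functions determined by their $g$-vectors. Use the duality in its twisted form $D^{-1}G(t)^{T}DC(t)=I$; integrality of $G(t)$ then forces $d_{\sigma(i)}=d_i$, so again $G(t)=P_\sigma$. The same proof therefore works for skew-symmetrizable $B$. In principle this would let the argument of Proposition~\ref{ADE} run directly on the non-simply-laced Zamolodchikov periodic matrices. That would bypass the folding and transpose reductions used in the proof of Theorem~\ref{mainThm}.
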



\section{A maximal green sequence}
\label{sec:maximalGreen}

\begin{proposition}
\label{maximalGreen}
    Let $B = (\Gamma, \Delta)$ be a Zamolodchikov periodic $B$-matrix with associated Coxeter numbers $h_{\Gamma}, h_{\Delta}$. 
    Let $\epsilon: [n]\rightarrow \{\circ, \bullet\}$ be a bipartition on $(\Gamma, \Delta)$ such that white vertices are sinks in $\Gamma$ and sources in $\Delta$, and black vertices are sources in $\Gamma$ and sinks in $\Delta$.
    Then, $\mu_{\circ}\mu_{\bullet}\mu_{\circ}\cdots$ ($h_{\Gamma}$ factors) and $\mu_{\bullet}\mu_{\circ}\mu_{\bullet}\cdots$ ($h_{\Delta}$ factors) are maximal green sequences for the framed matrix $\hat{B}$.
\end{proposition}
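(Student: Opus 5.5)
We will prove the statement by tracking the $c$-vectors of the framed matrix $\hat{B}$ along the bipartite belt and showing that they evolve by a ``tropical $Y$-system'' which decouples into two commuting Coxeter-type dynamics, one governed by $\Gamma$ and one by $\Delta$. By the remark following the definition of tropical $T$-systems, the $c$-vectors under bipartite mutation satisfy the tropicalization of the $Y$-system. Recall that a vertex is mutated green exactly when its $c$-vector is nonnegative. The plan therefore has three steps. First, we show that each vertex mutated in the first $h_\Gamma$ factors of $\mu_\circ\mu_\bullet\cdots$ has a nonnegative $c$-vector at the moment of mutation. Second, we show that after these $h_\Gamma$ factors all $c$-vectors are nonpositive. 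Third, we repeat the argument with the roles of $\Gamma$ and $\Delta$ and of the colors swapped.

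The key structural input is the classification from \cite{chin} of Zamolodchikov periodic pairs $(\Gamma,\Delta)$. In that classification $\Gamma$ and $\Delta$ are (unsigned) adjacency matrices of disjoint unions of finite type Dynkin diagrams; in the admissible ADE-type setting they commute, and their Coxeter numbers are $h_\Gamma,h_\Delta$. The orientation hypothesis on $\epsilon$ (white sinks in $\Gamma$, sources in $\Delta$) makes each connected component of $\Gamma$ an alternating quiver. Our main strategy is to apply \cref{preservingThm}. We partition the vertex set $\pi$ according to the connected components of $\Gamma$; the resulting induced framed components $\hat B_i$ are framed alternating Dynkin quivers. In the non-simply-laced case we use the folded analogue, proved by the same argument as \cref{kellerMaximalGreen} via the tropical $Y$-system of a finite type Cartan matrix from \cite{fomin4}. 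By \cref{kellerMaximalGreen}, on each component the alternating bipartite sequence of length $h_\Gamma$, starting with the appropriate color, is a maximal green sequence. The global sequence $\mu_\circ\mu_\bullet\cdots$ with $h_\Gamma$ factors is then a shuffle of these local sequences, since vertices of one color commute. Here we use that every component of $\Gamma$ has the same Coxeter number $h_\Gamma$, which the classification guarantees for a Zamolodchikov periodic pair. Any component whose Coxeter number is a proper divisor would need separate treatment; for such a component we would append trivial steps, or note that all components share $h_\Gamma$.

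The main obstacle is verifying the component preserving condition of \cref{preservingDef} at every step. Concretely, we must show that when a vertex $k$ is mutated and is green, every $j$ with $b_{kj}<0$ (an arrow into $k$ in the current quiver) lies in the same $\Gamma$-component. Equivalently, at each mutation time the $\Delta$-arrows at $k$ must all point away from $k$ relative to its greenness. Our first attempt will be an induction on the number of bipartite steps. We use recurrence, namely $\mu_\circ\mu_\bullet(B)=B$ and $\mu_\circ(B)=-B$, to see that before each bipartite step the mutable part of the quiver is $\pm B$. The framing arrows evolve by the tropical $Y$-system, which we split via the commutation of $\Gamma$ and $\Delta$: we will show inductively that every $c$-vector is supported on a single $\Gamma$-component. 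This support claim is the crux. It makes the $\Delta$-arrows harmless, because in $\pm B$ a green white vertex has all its incoming arrows coming from $\Gamma$. Once the claim holds, \cref{preservingProp} ensures the local dynamics agree with Keller's, and \cref{preservingThm} finishes the proof. The $\Delta$ statement then follows by the symmetric argument, replacing $B$ by $-B$ with the colors interchanged.
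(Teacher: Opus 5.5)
Your proposal is correct and follows essentially the paper's route: partition by the connected components of $\Gamma$, use Keller's result and its folded version (Lemma~\ref{maxGreenLem}) on each alternating component, use recurrence to keep the mutable part equal to $\pm B$ so that every mutated vertex is a $\Gamma$-sink and $\Delta$-source, and conclude with the component-preserving shuffle theorem (in its skew-symmetrizable form, Proposition~\ref{preservingProp2}). The only real difference is how local greenness is passed to global greenness: the paper uses sign-coherence, while your support claim does it directly, and that claim follows by a short induction from the mutation rule rather than from the commutation of $\Gamma$ and $\Delta$ (a green mutation at $k$ adds $c_k$ only to the $c$-vectors of vertices $i$ with $b_{ik}>0$, which are $\Gamma$-neighbours of $k$).
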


Before proving the above proposition, we need to first establish a couple of statements.
We say an orientation of a Dynkin diagram is \textit{alternating} if every vertex is either a source or a sink.

As defined in \cite{chin}, a \textit{bicolored automorphism} $f: [n] \rightarrow [n]$ is an automorphism that satisfies the following.
\begin{itemize}
    \item[(i)]
        $\epsilon_{f(i)} = \epsilon_i$ (preserves bipartite coloring of vertices).
    \item[(ii)]
        For all $i_1, i_2$ in the same $f$-orbit $I$, and for all $j\in[n]$, $b_{i_1j}b_{i_2j} \geq 0$ (preserves edge colors).
    \item[(iii)]
        For all $i, j\in [n]$, $b_{f(i)f(j)} = b_{ij}$.
    \item[(iv)]
        For all $i, j$ in the same $f$-orbit $I$, $b_{ij} = b_{ji} = 0$.
\end{itemize}
\begin{lemma}
\label{maxGreenLem}
    Let $B$ be a skew-symmetrizable matrix associated to an alternating Dynkin diagram $\Lambda$ with Coxeter number $h$.
    Let $i_+$ denote the set of vertices that are sources (i.e. $i$ such that $\forall j\in [n],$ $b_{ij} \geq 0$), and let $i_-$ denote the set of vertices that are sinks (i.e. $i$ such that $\forall j\in [n],$ $b_{ij} \leq 0$).
    Then, $i_+i_-$ and $i_-i_+i_-\dots$ ($h$ factors) are maximal green sequences for the framed matrix $\hat{B}$.
\end{lemma}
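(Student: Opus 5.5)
The plan is to extend Proposition~\ref{kellerMaximalGreen} from the simply-laced case to all (skew-symmetrizable) Dynkin types by folding, using the bicolored automorphisms just defined. The easy sequence $i_+i_-$ is handled directly. In $\hat B$ every mutable vertex is green. Mutating at the sources $i_+$ is a commuting family of mutations. Since the vertices of $i_+$ are pairwise non-adjacent and a source $k$ has $b_{kj}\ge 0$, mutation at $k$ only negates row/column $k$. Hence each $k\in i_+$ becomes red with $c$-vector $-e_k$. For a sink $j\in i_-$, the $c$-vector becomes $e_j+\sum_{k\in i_+}|b_{kj}|e_k$ by the frozen-part mutation rule, so $j$ stays green. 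The sinks now point to the (now negated) sources as sources in the mutable part. Mutating at all of $i_-$ therefore sends each $c_j$ to its negative and leaves $-e_k$ unchanged, because $b_{jk}$ has the appropriate sign. The result is that all vertices are red. This is a short sign computation with the mutation rules.

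For the long sequence $i_-i_+i_-\cdots$ ($h$ factors), I would use the standard unfolding. Every finite-type alternating Dynkin diagram $\Lambda$ of type $B_n,C_n,F_4,G_2$ arises from a simply-laced alternating quiver $Q$ of type $A_{2n-1},D_{n+1},E_6,D_4$ with the same Coxeter number $h$. The folding is by a group of automorphisms satisfying conditions (i)--(iv) of a bicolored automorphism, for a suitable skew-symmetrizing choice; for $C_n$ versus $B_n$, take $B$ or $-B^T$ accordingly. The key fact I would verify is the following. If $f$ is a bicolored automorphism and the mutation at a full $f$-orbit $I$ is performed (these mutations commute by (iv)), then the folded matrix of $\mu_I(\hat B_Q)$ equals $\mu_{[I]}$ of the folded matrix, where the frozen vertices are folded orbitwise as well. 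Moreover, the $c$-vector of $[I]$ in the folded seed is obtained from the $c$-vectors of the vertices of $I$ by summing over orbits of frozen vertices. Condition (ii) guarantees the sign-coherence needed so that no cancellation occurs in $b_{ij}+\sum_{k\in I}\ldots$. Since a bipartite mutation $i_\pm$ on $Q$ is a union of full orbits, each step of $i_-i_+\cdots$ on $\Lambda$ is the folding of the corresponding step on $Q$.

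Given this compatibility, the greenness of each mutated vertex and the final all-red condition transfer downward. The sign of a folded $c$-vector equals the common sign of the $c$-vectors in its orbit, and these are all equal by $f$-invariance. Proposition~\ref{kellerMaximalGreen} applied to $Q$ then gives that $i_-i_+i_-\cdots$ ($h$ factors) is a maximal green sequence for $\hat B$. The main obstacle is the folding-compatibility step: checking that mutation at a whole orbit commutes with folding for the framed matrix, including the frozen columns, and that the skew-symmetrizer is tracked correctly. If this proves delicate, the fallback is a direct argument. One would identify the $c$-vectors along the bipartite belt with the tropical $Y$-system and use the known fact that in finite type they run through the positive roots, via Coxeter element powers $(s_-s_+)^m$, until after $h$ steps they become the negative simple roots $-e_i$. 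Each mutated vertex is green because the relevant $c$-vector is a positive root before the sign flip.
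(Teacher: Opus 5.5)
Your treatment of the $h$-factor sequence is essentially the paper's proof. You realize $B$ as the folding of an alternating ADE quiver along a bicolored automorphism $f$ with the same Coxeter number, and extend $f$ to the frozen vertices. You then check that each bipartite step, being a union of full orbits, commutes with folding of the framed matrix, and pull back Proposition~\ref{kellerMaximalGreen}. The paper folds $i_+i_-$ the same way. Your direct treatment of that case is a legitimate shortcut, and in fact works for any bipartite $B$.

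Two things need repair.

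First, the step you flag as the main obstacle is not settled by condition (ii), which is a hypothesis on the initial mutable part. What must be maintained after every step is $b_{ij}b_{f(i)j}\ge 0$ for $i$ mutable and $j$ frozen. In other words, the $c$-vectors in one orbit must agree in sign coordinatewise, so that nothing cancels in the orbit sums. The paper obtains this from $f$-invariance together with sign-coherence, $b_{ij}b_{f(i)j}=b_{f(i)f(j)}b_{f(i)j}\ge 0$. This is the same fact you invoke later for transferring greenness, and it belongs in the compatibility check.

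Second, your computation for $i_+i_-$ contradicts itself. With $\hat B=[B\;I]$, a source $k$ has $b_{k\ell}\ge 0$ for every $\ell$, frozen ones included, and $b_{jk}\le 0$ for every mutable $j$. So mutating at $k$ changes nothing outside row and column $k$. Hence a sink keeps $c_j=e_j$, not $e_j+\sum_k|b_{kj}|e_k$. That formula belongs to the transposed convention with coefficient rows below $B$, where the roles of sources and sinks are interchanged. Your conclusion survives, because the final step only uses $c_j\ge 0$ and $b_{kj}\le 0$. The correct bookkeeping ends at $C=-I$.
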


\begin{proof}
    Follows via a basic folding argument. 
    By Proposition \ref{kellerMaximalGreen}, we can assume $B$ is not skew-symmetric, and folds via some bicolored automorphism $f$ to an ADE type $f(B)$. Recall that the Coxeter number $h$ is invariant of folding.
    If we start from the framed matrix $\hat{B}$, it suffices to show that the sequence of mutations $\mu_{\circ}, \mu_{\bullet}$ commutes with folding. Then, $i_+i_-$ and $i_-i_+i_-\dots$ with $h$ factors will take our matrix $\widehat{f(B)}$ to the coframed matrix $\widecheck{f(B)}$, and thus also take $\hat{B}$ to $\check{B}$.
    We already know from \cite{chin} that folding $B$ commutes with mutation at a single orbit.
    We need only check that after a mutation $\mu_{\circ}$ or $\mu_{\bullet}$, we retain an $f$-admissible matrix $B' = (b_{ij}')$, where $f$ is our bicolored automorphism.

    Recall from \cite{fomin1} that $B$ is $f$-admissible if
    \begin{itemize}
        \item[(1)]
            $b_{f(i)f(j)} = b_{ij}$,
        \item[(2)]
            $f$ sends mutable vertices to mutable vertices, and frozen vertices to frozen vertices,
        \item[(3)]
            if $i\sim i'$, then $b_{ii'} = 0$, and
        \item[(4)]
            if $i\sim i'$, then $b_{ij}b_{i'j}\geq 0$.
    \end{itemize}
    Let $f$ be some bicolored automorphism, which we can extend to an automorphism $\hat{f}$ on $\hat{B}$ (i.e. if $f(i) = j$, then set $\hat{f}(i') = j'$). 
    Then, (1)-(4) already hold for mutable vertices, so it suffices to check the frozen vertices.
    
    (2) and (3) are clear by construction of $\hat{f}$ and the nature of frozen vertices.
    
    Let's consider (1).
    Without loss of generality, we can say $i$ is frozen and $j$ is mutable.
    When $j$ is in the mutation set, $b_{f(i)f(j)}' = -b_{f(i)f(j)} = -b_{ij} = b_{ij}'$.
    The more interesting case is when $j$ is not mutated, but all adjacent vertices (closed under $f$-orbits) are mutated.
    For each mutated orbit $K$, $b_{ij}'$ is obtained by either adding or subtracting $\sum_{k\in K}b_{ik}b_{kj}$, depending on whether $b_{ik}, b_{kj} \geq 0$ or $b_{kj}, b_{kj}\leq 0$ for a generic $k\in K$.
    Notice that $b_{ik}\geq 0\iff b_{f(i)k}\geq 0$ by sign-coherence, and $b_{kj} \geq 0 \iff b_{kf(j)}\geq 0$ as $k,j$ are both mutable.
    In the non-negative case, we get
    \begin{align*}
        b_{ij} + \sum_{k\in K} b_{ik}b_{kj} &= b_{f(i)f(j)} + \sum_{k\in K} b_{f(i)f(k)}b_{f(k)f(j)} = b_{f(i)f(j)} + \sum_{k\in K} b_{f(i)k}b_{kf(j)},
    \end{align*}
    and a similar statement for the non-positive case.
    So, $b_{ij} = b_{f(i)f(j)}$.
    
    It suffices to show (4) for the case that $i' = f(i)$.
    The only interesting case here is when $i$ is frozen and $j$ is mutable, or vice versa.
    When $i$ is frozen, so is $f(i)$, and $b_{ij}b_{f(i)j}\geq 0$ by sign-coherence.
    Otherwise, when $i$ is mutable and $j$ is frozen, we know from (1) that $b_{ij} = b_{f(i)f(j)}$, so
    $$b_{ij}b_{f(i)j} = b_{f(i)f(j)}b_{f(i)j}\geq 0$$
    by sign-coherence once again.
\end{proof}

Below, we state the skew-symmetrizable analogue of Proposition \ref{preservingProp} and Theorem \ref{preservingThm}.
\begin{proposition}
\label{preservingProp2}
    Let $(\hat{B}, \hat{\pi})$ be a partitioned framed skew-symmetrizable matrix. 
    If $k$ is a component preserving vertex, then $\mu_k(\hat{B})_i = \mu_k(\hat{B}_i)$, $\forall i\in [\ell]$.
    Moreover, if each $\hat{B}_i$ has a maximal green sequence $\sigma_i$, let $\tau$ be a shuffle of $\{\sigma_i\}_{i\in I}$ such that at each mutation step, $k$ is component preserving with respect to $\pi$. Then, $\tau$ is a maximal green sequence of $\hat{B}$.
\end{proposition}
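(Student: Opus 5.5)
The plan is to redo, for skew-symmetrizable matrices, the arguments behind Proposition \ref{preservingProp} and Theorem \ref{preservingThm}. Those proofs only use the mutation rule and sign-coherence, and both are available here: the mutation rule is the same formula, and sign-coherence of $c$-vectors holds for skew-symmetrizable matrices by \cite{sign-coherence}. I would therefore prove the two claims in order, first the local commutation statement and then the shuffle statement, by induction on the length of $\tau$.

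\textbf{Step 1: commutation.} Fix a component $\pi_i$ and a component preserving vertex $k$. We must show $\mu_k(\hat B)_{pq} = \mu_k(\hat B_i)_{pq}$ for every pair $p,q$ in $\hat\pi_i$, where $\hat\pi_i$ consists of $\pi_i$ together with its frozen copies.
\begin{itemize}
\item If $k\notin \pi_i$, the left side is $b_{pq} \pm b_{pk}b_{kq}$ with the correction applied only when $b_{pk}$ and $b_{kq}$ have the same sign, while the right side is just $b_{pq}$. We need the correction term to vanish. Suppose $k$ is green. Then $b_{kj}<0$ forces $j$ into $k$'s own component, so for $q\in\hat\pi_i$ we have $b_{kq}\ge 0$. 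Similarly $b_{pk}\le 0$, using that $b_{kp}\ge 0$ gives $b_{pk}\le 0$ by skew-symmetrizability when $p$ is mutable. When $p$ is frozen, the entry is fixed by the definition of green. Hence $b_{pk}$ and $b_{kq}$ never share a sign, and no correction occurs. The red case is symmetric.
\item If $k\in\pi_i$, the formula for $\mu_k(\hat B)_{pq}$ only involves $b_{pk}$, $b_{kq}$ and $b_{pq}$, all of which are entries of $\hat B_i$. So the two sides agree trivially.
\end{itemize}
Skew-symmetrizability enters only through sign symmetry: $b_{pk}$ and $b_{kp}$ always have opposite signs. That is exactly what replaces skew-symmetry in the original argument.

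\textbf{Step 2: the shuffle statement.} I induct along $\tau$. Suppose that after a prefix of $\tau$, each component of the current matrix equals the matrix obtained by applying the corresponding prefix of $\sigma_i$ to $\hat B_i$. The next mutation is at a component preserving vertex, so Step 1 preserves this invariant.

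Greenness of a vertex is read off its row restricted to the frozen columns. I take the partition $\hat\pi$ so that each frozen vertex lies in the same component as its mutable vertex, as in the framed setup. I also need every frozen entry $b_{kj'}$ with $j'$ outside $k$'s component to stay zero; I expect this to be the main obstacle. The correction terms at such entries vanish by the argument of Step 1, applied with the pair $(k, j')$. Granting this, greenness in $\hat B$ agrees with greenness in the component. Hence each step of $\tau$ is green, since $\sigma_i$ is green. At the end every component has reached its coframed form, so $\hat B$ has become $\check B$ with all vertices red, and $\tau$ is a maximal green sequence.
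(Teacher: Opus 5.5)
Your proposal is correct and takes essentially the paper's route. The paper only asserts that the skew-symmetric proof of \cite{preserving} goes through verbatim; you carry that proof out, with sign-skew-symmetry ($b_{pk}$ and $b_{kp}$ of opposite signs) replacing skew-symmetry.

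The one soft spot is your justification that the off-component frozen entries $b_{kj'}$ stay zero. When the mutated vertex $m$ lies in $k$'s own component (including $m=k$), the sign argument of Step 1 gives no control over $b_{km}$. In that case you must use the inductive hypothesis $b_{mj'}=0$, i.e.\ run the induction on this invariant itself. Alternatively, you can bypass the invariant via sign-coherence, as the paper does in the proof of Proposition \ref{maximalGreen}. Also note that at the end you only need ``all vertices red'' in $\hat B$, not literally $\hat B=\check B$.
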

\begin{proof}
    The proof is identical to that of the skew-symmetric case, so we refer the reader to \cite{preserving}.
\end{proof}

\begin{proof}[Proof of Proposition \ref{maximalGreen}]
    Let the components in our partition be the connected components of $\Gamma$.
    If we mutate at all white vertices, these are sinks within $\Gamma$ and sources within $\Delta$.
    Initially, we are in the framed quiver, so all mutable vertices are green.
    So, we are in case (1) of the definition of component preserving (\ref{preservingDef}), and we can mutate each component independently.
    By sign-coherence, the global color is the local color of each vertex (red or green).

    Once we have applied $\mu_{\circ}$, all roles have been reversed, and now $\mu_{\bullet}$ mutates at all sinks in $\Gamma$.
    So long as all black vertices are green, this will fall under (1) again and will be component preserving.
    This pattern repeats and as long as our mutation set is green, we are always left in a position where $\mu_{\circ}, \mu_{\bullet}$ is component preserving.
    By Lemma \ref{maxGreenLem}, we know at each step, our mutation set is green, so the mutation sequence $\mu_{\circ}\mu_{\bullet}\mu_{\circ}\cdots$ ($h_{\Gamma}$ factors) is always component preserving.
    By Proposition \ref{preservingProp2}, this sequence is maximal green.

    By a similar argument, $\mu_{\bullet}\mu_{\circ}\mu_{\bullet}\dots$ ($h_{\Delta}$ factors) is also a maximal green sequence.
\end{proof}
\begin{remark}
    The same argument does not work for the maximal green sequence $\mu_{\bullet}\mu_{\circ}$, as mutating at all sources is not component preserving within our chosen partition $\pi$.
\end{remark}
\begin{corollary}
    In the same context as above, $\mu_{\circ}\mu_{\bullet}\mu_{\circ}\cdots$ ($h_{\Gamma}$ factors) and $\mu_{\bullet}\mu_{\circ}\mu_{\bullet}\cdots$ ($h_{\Delta}$ factors) each result in $-C$ being a permutation matrix.
\end{corollary}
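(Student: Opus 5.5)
The plan is to combine Proposition \ref{maximalGreen} with a standard fact about maximal green sequences: after such a sequence the $C$-matrix is the negative of a permutation matrix. Concretely, start from the framed matrix $\hat{B} = \begin{bmatrix} B & I\end{bmatrix}$, so the initial $C$-matrix is $I$, and apply the sequence $\mu_{\circ}\mu_{\bullet}\mu_{\circ}\cdots$ ($h_{\Gamma}$ factors). By Proposition \ref{maximalGreen} this is a maximal green sequence, so at the end every mutable vertex is red. Sign-coherence then says that every $c$-vector $c_i'$ of the resulting $C$-matrix $C'$ lies in $\mathbb{Z}_{\leq 0}^n$ and is nonzero.

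The next step is to show that $-C'$ is a permutation matrix. I would use the fact that $C$-matrices are invertible over $\mathbb{Z}$, since each mutation changes the $C$-matrix by a unimodular transformation, so $\det C' = \pm 1$. I would combine this with the duality/inverse relation between $C$-matrices and $G$-matrices, in the skew-symmetrizable form $G'^{T} D C' = D$ for the symmetrizer $D$. Since $-C'$ is a nonnegative integer matrix, the same holds for $-C'^{-1}$: it is nonnegative up to the diagonal rescaling by $D$, because $G'$ is column sign-coherent and here is forced to have nonpositive entries. A nonnegative integer matrix whose inverse is also nonnegative must be a monomial matrix. Unimodularity together with $D$-compatibility then forces the nonzero entries to be $1$, so $-C'$ is a permutation matrix.

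A cleaner alternative, which I would try first, is to invoke the known result of Brüstle--Dupont--Pérotin (and its skew-symmetrizable version via folding, as in Lemma \ref{maxGreenLem}): the final seed of any maximal green sequence is isomorphic to the coframed seed $\check{B}$ up to a permutation $\sigma$ of the mutable vertices. In $C$-matrix terms this says exactly $C' = -P_\sigma$. In the skew-symmetric case this is standard. In the skew-symmetrizable case I would deduce it by unfolding $B$ along a bicolored automorphism $f$ to a skew-symmetric $\tilde B$. The proof of Lemma \ref{maxGreenLem} shows that the bipartite mutations commute with folding, so the corresponding unfolded sequence is maximal green for $\widehat{\tilde B}$, whose final $C$-matrix is minus a permutation matrix. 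That permutation commutes with $f$, and folding it gives minus a permutation matrix for $B$.

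The same argument applies verbatim to $\mu_{\bullet}\mu_{\circ}\mu_{\bullet}\cdots$ ($h_{\Delta}$ factors). The main obstacle is the step from ``all red'' to ``$-C'$ is a permutation matrix'' in the skew-symmetrizable setting. I expect the folding route to be the most economical, with the delicate point being that the permutation on the unfolded quiver respects $f$-orbits. That should follow from the $f$-admissibility preserved at each step, as verified in the proof of Lemma \ref{maxGreenLem}.
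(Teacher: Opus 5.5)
Your skeleton matches the paper's. The corollary is stated there without proof, as an immediate consequence of Proposition~\ref{maximalGreen} plus the standard fact that a maximal green sequence ends at the coframed seed up to a permutation of the mutable vertices; the paper already invokes this fact in the proof of Lemma~\ref{maxGreenLem}. If you cite that fact, your first paragraph is complete. The problems are in your two attempts to justify the fact in the skew-symmetrizable case, and each has a concrete defect.

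\textbf{The linear-algebra route.} $G$-matrices are not column sign-coherent: $g$-vectors with entries of both signs already occur in type $A_2$. Nothing you wrote actually forces $G'\le 0$. What holds is \emph{row} sign-coherence of $G'$, and that is exactly what you need.
\begin{itemize}
\item From $G'^{\top}DC'=D$ you get $C'^{-1}=D^{-1}G'^{\top}D$, so every column of $C'^{-1}$ is sign-coherent.
\item Put $M=-C'\ge 0$. If the $j$-th column $v$ of $M^{-1}$ were $\le 0$, then $Mv\le 0$, contradicting $Mv=e_j$. Hence $M^{-1}\ge 0$.
\item A nonnegative matrix with nonnegative inverse is monomial. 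Since $M$ is an integer matrix with $\det M=\pm 1$, its nonzero entries are $1$, so $M$ is a permutation matrix. The symmetrizer $D$ plays no further role.
\end{itemize}
This version needs no folding at all, and it is the cleanest way to close the step.

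\textbf{The folding route.} Here you assume the whole of $B$ unfolds along a bicolored automorphism to a skew-symmetric matrix. The classification the paper relies on (\cite[Proposition~3.22]{chin}) produces Zamolodchikov periodic matrices by foldings \emph{and} transposes, so a global unfolding is not available in general. You have two ways to repair this.
\begin{itemize}
\item Treat Langlands duality separately. Since $-B^{\top}=DBD^{-1}$, the same mutation sequence is maximal green for $-B^{\top}$, and its $C$-matrices are $DC'D^{-1}$. Monomiality plus integrality and unimodularity again give a permutation matrix.
\item Argue along the lines of the proof of Proposition~\ref{maximalGreen}. Unfold only the Dynkin $\Gamma$-components, where Lemma~\ref{maxGreenLem} applies. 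Then check that a component-preserving green mutation never creates an arrow between a mutable vertex of one component and a frozen vertex of another. Consequently $C'$ is block diagonal, and each block is minus a permutation matrix.
\end{itemize}
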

\begin{remark}
\label{remark1}
    Beginning from the coframed matrix, applying the mutation $N = h_{\Gamma} + h_{\Delta}$ times results in a \textit{frozen isomorphism}, (i.e. an isomorphism which fixes the frozen vertices).
\end{remark}

\section{Half-periodicity of Zamolodchikov periodic cluster algebras}
\label{sec:half-periodicity}

In this section, we prove half-periodicity for each of the Zamolodchikov periodic $n\times n$ $B$-matrices.
Throughout this section, we use the following notation for every $j\in [n]$.
$$\eta_j \coloneqq \begin{cases}
    0 & \text{if }\epsilon_j = \circ,\\
    1 & \text{if }\epsilon_j = \bullet.
\end{cases}$$
We start by analyzing the potential frozen isomorphisms that appear, as described in Remark \ref{remark1}.
Examples of these isomorphisms for types $A_{5}\ast D_4$ and $F_4\otimes A_2$ are depicted in Figures \ref{fig:AD} and \ref{fig:BA}.
We use red edges to depict the $\Gamma$-components, and blue edges to depict the $\Delta$-components.

\begin{figure}
\scalebox{0.6}{
\begin{tikzpicture}
\draw (-1.00, 0.00) node [anchor=north west][inner sep=0.75pt] {\Large{$x_1$}};
\draw (-1.00, 2.00) node [anchor=north west][inner sep=0.75pt] {\Large{$x_2$}};
\draw (1.50, 3.75) node [anchor=north west][inner sep=0.75pt] {\Large{$x_3$}};
\draw (2.50, 1.00) node [anchor=north west][inner sep=0.75pt] {\Large{$x_4$}};
\draw (2.50, -1.00) node [anchor=north west][inner sep=0.75pt] {\Large{$x_5$}};
\draw (6.50, -0.50) node [anchor=north west][inner sep=0.75pt] {\Large{$x_6$}};
\draw (6.50, 1.50) node [anchor=north west][inner sep=0.75pt] {\Large{$x_7$}};
\draw (5.50, 3.75) node [anchor=north west][inner sep=0.75pt] {\Large{$x_8$}};
\draw (8.50, 2.50) node [anchor=north west][inner sep=0.75pt] {\Large{$x_9$}};

\draw (14 + -1.00, 0.00) node [anchor=north west][inner sep=0.75pt] {\Large{$x_1$}};
\draw (14 + -1.00, 2.00) node [anchor=north west][inner sep=0.75pt] {\Large{$x_2$}};
\draw (14 + 1.50, 3.75) node [anchor=north west][inner sep=0.75pt] {\Large{$x_3$}};
\draw (14 + 2.50, 1.00) node [anchor=north west][inner sep=0.75pt] {\Large{$x_4$}};
\draw (14 + 2.50, -1.00) node [anchor=north west][inner sep=0.75pt] {\Large{$x_5$}};
\draw (14 + 6.50, -0.50) node [anchor=north west][inner sep=0.75pt] {\Large{$x_6$}};
\draw (14 + 6.50, 1.50) node [anchor=north west][inner sep=0.75pt] {\Large{$x_7$}};
\draw (14 + 5.50, 3.75) node [anchor=north west][inner sep=0.75pt] {\Large{$x_9$}};
\draw (14 + 8.50, 2.50) node [anchor=north west][inner sep=0.75pt] {\Large{$x_8$}};

\foreach \k in {0, 1}
{
    \coordinate (\k v0x0) at (14 * \k + 0.00,0.00);
    \coordinate (\k v0x1) at (14 * \k + 0.00,2.00);
    \coordinate (\k v0x2) at (14 * \k + 2.00,3.00);
    \coordinate (\k v0x3) at (14 * \k + 3.50,1.00);
    \coordinate (\k v0x4) at (14 * \k + 3.50,-1.00);
    \coordinate (\k v1x0) at (14 * \k + 6.00,-0.50);
    \coordinate (\k v1x1) at (14 * \k + 6.00,1.50);
    \coordinate (\k v1x2) at (14 * \k + 5.00,3.50);
    \coordinate (\k v1x3) at (14 * \k + 8.00,2.50);
    \foreach \i in {0, 1, 2, 3}
    {
        \pgfmathtruncatemacro{\j}{\i + 1};
        \draw[color=red,line width=0.75mm] (\k v0x\i) to[] (\k v0x\j);
    }
    \draw[color=red,line width=0.75mm] (\k v1x0) to[] (\k v1x1);
    \draw[color=red,line width=0.75mm] (\k v1x1) to[] (\k v1x2);
    \draw[color=red,line width=0.75mm] (\k v1x1) to[] (\k v1x3);
    \foreach \i in {0, 1}
    {
        \pgfmathtruncatemacro{\j}{4 - \i};
        \draw[color=blue,line width=0.75mm] (\k v0x\i) to[] (\k v1x\i);
        \draw[color=blue,line width=0.75mm] (\k v0x\j) to[] (\k v1x\i);
    }
    \draw[color=blue,line width=0.75mm] (\k v0x2) to[] (\k v1x2);
    \draw[color=blue,line width=0.75mm] (\k v0x2) to[] (\k v1x3);
    
    \draw[fill=black!75!white] (\k v0x0.center) circle (0.2);
    \draw[fill=white] (\k v0x1.center) circle (0.2);
    \draw[fill=black!75!white] (\k v0x2.center) circle (0.2);
    \draw[fill=white] (\k v0x3.center) circle (0.2);
    \draw[fill=black!75!white] (\k v0x4.center) circle (0.2);
    \draw[fill=white] (\k v1x0.center) circle (0.2);
    \draw[fill=black!75!white] (\k v1x1.center) circle (0.2);
    \draw[fill=white] (\k v1x2.center) circle (0.2);
    \draw[fill=white] (\k v1x3.center) circle (0.2);
}
\draw[color=black, line width=0.75mm] (10.00 + 0.56, 1.50 + -0.22) to[] (10.00 + 0.56, 1.50 + 0.22) ;
\draw[color=black, line width=0.75mm] (10.00 + 1.44, 1.50 + 0.0) to[] (10.00 + 0.56, 1.50 + 0.00) ;
\draw[color=black, line width=0.75mm] (10.00 + 1.44, 1.50 + 0.0) to[] (10.00 + 1.22, 1.50 + 0.22) ;
\draw[color=black, line width=0.75mm] (10.00 + 1.44, 1.50 + 0.0) to[] (10.00 + 1.22, 1.50 + -0.22) ;
\end{tikzpicture}}
\caption{\label{fig:AD} The color preserving permutation of the cluster variables for $A_5\ast D_4$. On the left is the initial cluster at $t = 0$. On the right is the permuted cluster variables at the half-period $t = 10$.}
\end{figure}

\begin{figure}
\scalebox{0.6}{
\begin{tikzpicture}
\draw (-1.00, 0.00) node [inner sep=0.75pt] {\Large{$x_1$}};
\draw (-1.00, 2.00) node [inner sep=0.75pt] {\Large{$x_2$}};
\draw (-1.00, 4.00) node [inner sep=0.75pt] {\Large{$x_3$}};
\draw (-1.00, 6.00) node [inner sep=0.75pt] {\Large{$x_4$}};
\draw (5.00, 0.00) node [inner sep=0.75pt] {\Large{$x_5$}};
\draw (5.00, 2.00) node [inner sep=0.75pt] {\Large{$x_6$}};
\draw (5.00, 4.00) node [inner sep=0.75pt] {\Large{$x_7$}};
\draw (5.00, 6.00) node [inner sep=0.75pt] {\Large{$x_8$}};

\draw (17.00, 0.00) node [inner sep=0.75pt] {\Large{$x_1$}};
\draw (17.00, 2.00) node [inner sep=0.75pt] {\Large{$x_2$}};
\draw (17.00, 4.00) node [inner sep=0.75pt] {\Large{$x_3$}};
\draw (17.00, 6.00) node [inner sep=0.75pt] {\Large{$x_4$}};
\draw (11.00, 0.00) node [inner sep=0.75pt] {\Large{$x_5$}};
\draw (11.00, 2.00) node [inner sep=0.75pt] {\Large{$x_6$}};
\draw (11.00, 4.00) node [inner sep=0.75pt] {\Large{$x_7$}};
\draw (11.00, 6.00) node [inner sep=0.75pt] {\Large{$x_8$}};

\foreach \k in {0, 1}
{
\coordinate (\k v0x0) at (12 * \k + 0.00,0.00);
\coordinate (\k v0x1) at (12 * \k + 0.00,2.00);
\coordinate (\k v0x2) at (12 * \k + 0.00,4.00);
\coordinate (\k v0x3) at (12 * \k + 0.00,6.00);

\coordinate (\k v1x0) at (12 * \k + 4.00,0);
\coordinate (\k v1x1) at (12 * \k + 4.00,2);
\coordinate (\k v1x2) at (12 * \k + 4.00,4);
\coordinate (\k v1x3) at (12 * \k + 4.00,6);

\draw[color=red, line width=0.75mm] (12 * \k + 0.0, 3.11) to[] (12 * \k + -0.22, 2.89) ;
\draw[color=red, line width=0.75mm] (12 * \k + 0.0, 3.11) to[] (12 * \k + 0.22, 2.89) ;
\draw[color=red, line width=0.75mm] (12 * \k + -0.07, 2.0) to[] (12 * \k + -0.07, 4.0) ;
\draw[color=red, line width=0.75mm] (12 * \k + 0.07, 2.0) to[] (12 * \k + 0.07, 4.0) ;
\draw[color=red, line width=0.75mm] (12 * \k + 4.0, 3.11) to[] (12 * \k + 3.78, 2.89) ;
\draw[color=red, line width=0.75mm] (12 * \k + 4.0, 3.11) to[] (12 * \k + 4.22, 2.89) ;
\draw[color=red, line width=0.75mm] (12 * \k + 3.93, 2.0) to[] (12 * \k + 3.93, 4.0) ;
\draw[color=red, line width=0.75mm] (12 * \k + 4.07, 2.0) to[] (12 * \k + 4.07, 4.0) ;

\foreach \i in {0, 1, 2, 3}
{
    \draw[color=blue,line width=0.75mm] (\k v0x\i) to[] (\k v1x\i);
}
\foreach \i in {0, 1}
{
    \draw[color=red,line width=0.75mm] (\k v\i x0) to[] (\k v\i x1);
    \draw[color=red,line width=0.75mm] (\k v\i x2) to[] (\k v\i x3);
}
\draw[fill=white] (\k v0x1.center) circle (0.2);
\draw[fill=white] (\k v0x3.center) circle (0.2);
\draw[fill=white] (\k v1x0.center) circle (0.2);
\draw[fill=white] (\k v1x2.center) circle (0.2);
\draw[fill=black!75!white] (\k v0x0.center) circle (0.2);
\draw[fill=black!75!white] (\k v0x2.center) circle (0.2);
\draw[fill=black!75!white] (\k v1x1.center) circle (0.2);
\draw[fill=black!75!white] (\k v1x3.center) circle (0.2);
}
\draw[color=black, line width=0.75mm] (7.00 + 0.56, 3.00 + -0.22) to[] (7.00 + 0.56, 3.00 + 0.22) ;
\draw[color=black, line width=0.75mm] (7.00 + 1.44, 3.00 + 0.0) to[] (7.00 + 0.56, 3.00 + 0.00) ;
\draw[color=black, line width=0.75mm] (7.00 + 1.44, 3.00 + 0.0) to[] (7.00 + 1.22, 3.00 + 0.22) ;
\draw[color=black, line width=0.75mm] (7.00 + 1.44, 3.00 + 0.0) to[] (7.00 + 1.22, 3.00 + -0.22) ;
\end{tikzpicture}}
\caption{\label{fig:BA} The color reversing permutation of the cluster variables for $F_4\otimes A_2$. On the left is the initial cluster at $t = 0$. On the right is the permuted cluster variables at the half-period $t = 15$.}
\end{figure}
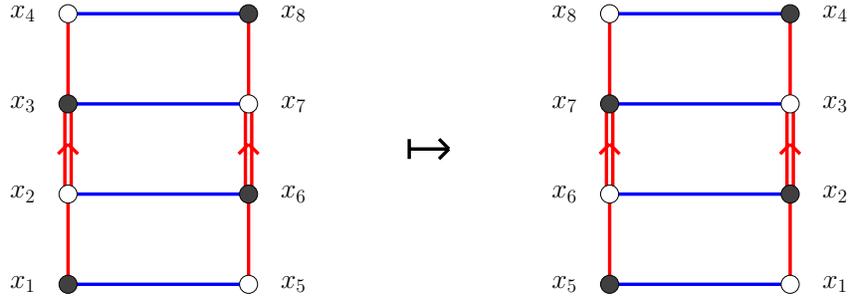

\begin{lemma}
\label{isomLem}
    A frozen isomorphism of a Zamolodchikov periodic $B$-matrix as described above in Remark \ref{remark1} must preserve the bipartite coloring when $N = h_{\Gamma} + h_{\Delta}$ is even, and reverse the coloring when $N$ is odd.
\end{lemma}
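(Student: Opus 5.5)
The plan is to track the shape of the exchange matrix along the bipartite belt, not the frozen part. After each bipartite mutation the mutable part of the matrix alternates between $B$ and $-B$. Since $B$ is recurrent, $\mu_\circ(B) = -B$ and $\mu_\bullet(-B) = B$, and both bipartite mutations negate $B$. After $N = h_\Gamma + h_\Delta$ bipartite mutations, starting from the coframed matrix $\check B$ as in Remark \ref{remark1}, the mutable part is therefore $(-1)^N B$. For the framed/coframed part, I will compose the two maximal green sequences from Proposition \ref{maximalGreen} and its corollary: the first takes $\hat B$ to a matrix whose frozen part is $-P$ for a permutation matrix $P$. Read carefully, this says that starting from $\check B$ and applying the $N$ mutations returns a matrix of the form $\begin{bmatrix} (-1)^N B & -P\end{bmatrix}$, up to relabelling. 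The frozen isomorphism $\sigma$ is then the permutation encoded by $P$, and it must satisfy
\[
b_{\sigma(i)\sigma(j)} = (-1)^N b_{ij} \quad\text{for all } i,j,
\]
because after relabelling mutable vertices by $\sigma$ the matrix must agree with $\check B$.

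The second step is to turn this sign relation into a statement about colors, using the sink/source structure. With the convention of Proposition \ref{maximalGreen}, white vertices are sinks in $\Gamma$ and sources in $\Delta$, and black vertices the reverse. So for an edge $i\to j$ of $B$ lying in a $\Gamma$-component we have $\epsilon_i = \bullet$ and $\epsilon_j = \circ$, while a $\Delta$-edge $i\to j$ has $\epsilon_i=\circ$ and $\epsilon_j = \bullet$.

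When $N$ is even, $\sigma$ is an automorphism of $B$. We also need that $\sigma$ respects the decomposition into $\Gamma$ and $\Delta$ edges, which holds because $\sigma$ arises from tracking the component-preserving mutations. The edge direction rule then forces $\epsilon_{\sigma(i)} = \epsilon_i$ on each connected component, so $\sigma$ is color preserving.

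When $N$ is odd, $\sigma$ is an anti-automorphism: it reverses every arrow while keeping $\Gamma$-edges as $\Gamma$-edges. A reversed $\Gamma$-arrow must still point from black to white, so $\epsilon_{\sigma(i)} \neq \epsilon_i$, and $\sigma$ is color reversing. This needs a connectedness assumption, which I will handle componentwise; isolated vertices do not occur in the relevant classification.

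The main obstacle is justifying that $\sigma$ maps $\Gamma$-edges to $\Gamma$-edges rather than swapping $\Gamma$ and $\Delta$. Without this, a sign flip could be absorbed by exchanging the roles of $\Gamma$ and $\Delta$ instead of exchanging colors. I would first try a direct count. In the belt, the vertices mutated at step $N$ are white if $N$ is odd (the belt starts with $\mu_\circ$) and black if $N$ is even. The tropical $T$-system / $c$-vector of vertex $i$ becomes $-e_{\sigma(i)}$ at the time $i$ was last mutated. I would then combine parity bookkeeping on when each vertex is last mutated with the splitting \eqref{splitTsystem}: $T_k(t)$ lives only at times $t \equiv \eta_k \pmod 2$. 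Since $\sigma$ must send a variable defined at time $t+N$ to one at time $t$, we get
\[
\eta_{\sigma(i)} \equiv \eta_i + N \pmod 2.
\]
This parity argument proves the lemma directly, bypassing the edge-type issue, so I will make it the primary argument and use the edge discussion only as a consistency check.
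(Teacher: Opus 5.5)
Your first step, deriving $b_{\sigma(i)\sigma(j)}=(-1)^N b_{ij}$ from recurrence, is also the paper's starting point. As you note, however, this relation cannot decide the colors. A color-reversing automorphism that interchanges $\Gamma$- and $\Delta$-edges satisfies the same sign relation as a color-preserving one that keeps them apart, and likewise for anti-automorphisms. Neither of your two ways around this works.

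\textbf{Edge types.} The claim that $\sigma$ respects the $\Gamma/\Delta$ decomposition ``because it arises from tracking the component-preserving mutations'' is only asserted. The $N$ steps are assembled from two maximal green sequences that are component preserving for \emph{different} partitions ($\Gamma$-components for one block, $\Delta$-components for the other). Moreover, given the sign relation and connectivity, preservation of edge types is equivalent to the lemma itself.

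\textbf{Parity argument.} This argument is circular. The frozen isomorphism only gives an equality of seeds, $x_i(N)=x_{\sigma(i)}(0)$, that is $T_i(t_i)=T_{\sigma(i)}(\eta_{\sigma(i)})$ with $t_i\in\{N,N+1\}$ and $t_i\equiv\eta_i \pmod 2$. For $N$ even and a color-reversing $\sigma$ this reads $T_i(N)=T_{\sigma(i)}(1)$ for white $i$ and $T_i(N+1)=T_{\sigma(i)}(0)$ for black $i$. Both equations are perfectly compatible with the splitting (\ref{splitTsystem}); the time shifts are simply $N-1$ and $N+1$ rather than $N$. The uniform relation $T_i(t+N)=T_{\sigma(i)}(t)$, which you use to force $\eta_{\sigma(i)}\equiv\eta_i+N$, is exactly what Proposition~\ref{ADE} derives \emph{from} this lemma.

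\textbf{How the paper closes the gap.} It uses casework via \cite{stembridge}. Odd $N$ occurs only for $A_{2n}\otimes\Delta$ with $\Delta\not\cong A_{2n}$, where a color-preserving map would have to swap $\Gamma$ and $\Delta$, which is impossible. For even $N$, a color-reversing automorphism must swap $\Gamma$ and $\Delta$. This leaves only the families with $\Gamma\cong\Delta$, which are checked one by one ($A_{2n}\otimes A_{2n}$ via \cite{pasha}). Folding and transpose then finish the argument.

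\textbf{A uniform repair of your idea.} Let $\Sigma_m$ be the seed after $m$ bipartite mutations, with $m\in\mathbb{Z}$, and use $\mu_S(\sigma\cdot\Sigma)=\sigma\cdot\mu_{\sigma(S)}(\Sigma)$. If $\sigma$ had the wrong color behavior, then starting from $\Sigma_N=\sigma\cdot\Sigma_0$ the belt would continue as a reflection, $\Sigma_{N+s}=\sigma\cdot\Sigma_{-s}$ for all $s$, rather than as a translation.
\begin{itemize}
\item For $N$ even, take $s=-N/2$. This gives $\Sigma_{N/2}=\sigma\cdot\Sigma_{N/2}$, forcing $\sigma=\mathrm{id}$ because the cluster variables (or $c$-vectors) of one seed are distinct. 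That contradicts color reversal.
\item For $N$ odd, take $s=-(N-1)/2$. Then two adjacent seeds of the belt would have the same cluster, which is impossible since each exchange produces a variable outside the old cluster.
\end{itemize}
Connectivity of $B$ is what guarantees that $\sigma$ is globally color preserving or globally color reversing. Some argument of this kind, or the paper's casework, is the ingredient your proposal is missing.
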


\begin{proof}
    Recall that $B$ is recurrent, so every application of $\mu_{\circ}, \mu_{\bullet}$ switches the source-sink structure of $B$.
    Thus for odd $N$, we are looking at isomorphisms $B\xrightarrow{\sim} -B$, and for even $N$, we are looking at isomorphisms $B\xrightarrow{\sim} B$.
    
    First, we consider ADE bigraphs, which were classified in \cite{stembridge}. 
    Let $N$ be odd.
    Notice that $A_{2n}$ is the only Dynkin diagram with an odd Coxeter number $h$.
    So the only case where we get odd $N$ is if we have a tensor product of the form $A_{2n}\otimes \Delta$, where $\Delta\not\cong A_{2n}$.
    If the isomorphism is color preserving, then to preserve the source-sink structure of $B$ at time $t = N$, it must necessarily reverse the edge colors, mapping $\Gamma\mapsto \Delta$.
    However, $\Delta\not\cong A_{2n}$, so this cannot be an isomorphism.
    Thus, our isomorphism must be color reversing.

    Now when $N$ is even, any isomorphism that is color reversing must necessarily reverse the colors of the edges as well as the vertices to preserve the source-sink structure of $B$.
    So if $\Delta\not\cong \Gamma$, then the claim already holds.
    This leaves tensor products $\Gamma\otimes \Gamma$, twists $\Gamma\times \Gamma$, $(AD^{n-1})_n, (A^{n-1}D)_n, E_6\equiv E_6\ast E_6\equiv E_6, D_6\ast D_6\equiv D_6, E_8\ast E_8\equiv E_8\equiv E_8, D_5\boxtimes A_7$, and $E_7\boxtimes D_{10}$.
    
    In each of these cases (except tensor products), there is a unique automorphism that switches $\Gamma$ and $\Delta$, but it is vertex color preserving, not reversing.
    For tensor products, if the number of $\circ$ vertices is distinct from the number of $\bullet$ vertices (as is the case for $A_{2n+1}, D_n$, and $E_7$), there cannot exist a color reversing automorphism.
    For $E_6\otimes E_6$ and $E_8\otimes E_8$, there is a unique automorphism that switches $\Gamma$ and $\Delta$, but it is vertex color preserving, not reversing.

    The only remaining case is $A_{2n}\otimes A_{2n}$. There does indeed exist a color reversing automorphism, but due to \cite{pasha}, we understand the behavior of $A_n\otimes A_m$ completely.
    At time $t = N$, the frozen isomorphism is an $180^{\circ}$ rotation, which is color preserving.

    So, all ADE bigraphs satisfy the claim.

    Now, we explore what happens when we apply folding and transpose to the ADE bigraphs.
    Recall that Coxeter numbers are invariant of folding and taking transpose.
    All isomorphisms of folded biagrams are the same as their unfolded versions, and all foldings we do are along bicolored automorphisms, which are color preserving.
    In addition, isomorphisms for $B$ are the same as isomorphisms for its Langland dual $-B^{\top}$.
    By \cite[Proposition~3.22]{chin}, every Zamolodchikov periodic $B$-matrix is obtainable from an ADE bigraph through foldings and transpose.
    So the claim is true for all Zamolodchikov periodic $B$-matrices.
\end{proof}

The following proposition states Theorem \ref{mainThm} for simply-laced types.
\begin{proposition}
\label{ADE}
    Let $(\Gamma, \Delta)$ be an ADE bigraph with Coxeter numbers $h_{\Gamma}, h_{\Delta}$.
    Let $T(t)$ be the $T$-system associated to $(\Gamma, \Delta)$.
    Then for all $i\in [n]$,
    \begin{align*}
        T_i(t + h_{\Gamma} + h_{\Delta}) = T_{\sigma(i)}(t)
    \end{align*}
    for some automorphism $\sigma\in S_n$ of $(\Gamma, \Delta)$ of order at most two.
    When $h_{\Gamma} + h_{\Delta}$ is even, this permutation is bipartite color preserving.
    Otherwise, it is color reversing.
\end{proposition}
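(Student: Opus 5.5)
The plan is to combine the maximal green sequence of Proposition \ref{maximalGreen} with the Periodicity Theorem \ref{periodicity}, which applies because ADE bigraphs are skew-symmetric. Work with principal coefficients at the framed matrix $\hat{B}$ and follow the bipartite belt, starting with $\mu_{\circ}$ so that white vertices (sinks in $\Gamma$) are mutated first. By Proposition \ref{maximalGreen}, the first $h_{\Gamma}$ bipartite mutations form a maximal green sequence. Hence after $h_{\Gamma}$ steps the $C$-matrix is $-P$ for some permutation matrix $P$, and $\hat{B}$ has been carried to a seed whose extended matrix is, up to a relabeling of mutable vertices, the coframed matrix $\check{B}$ (this is the Corollary).

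Next we continue from this coframed seed with the remaining bipartite mutations. Since $B$ is recurrent, the mutable part after $h_{\Gamma}$ steps is $\pm B$, with the sign fixed by the parity of $h_\Gamma$. We use the symmetry that negating all $c$-vectors together with $B$ turns a red/coframed situation into a green/framed one for $-B = (\Delta,\Gamma)$ with colours swapped. Under this symmetry, the next $h_{\Delta}$ bipartite mutations are exactly the maximal green sequence $\mu_{\bullet}\mu_{\circ}\cdots$ ($h_\Delta$ factors) of Proposition \ref{maximalGreen}, read for the relabeled matrix. We must check here that the next mutation in the belt has the colour that Proposition \ref{maximalGreen} requires to come first; this is why the two sequences have lengths $h_\Gamma$ and $h_\Delta$ with opposite starting colours. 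The effect is that every $c$-vector flips sign once more, so after $N=h_\Gamma+h_\Delta$ steps the $C$-matrix is a permutation matrix. In other words, $\hat{y}_i(N)=\hat{y}_{\sigma(i)}$ for some $\sigma\in S_n$, which is the frozen isomorphism of Remark \ref{remark1}.

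Applying Theorem \ref{periodicity} then gives $x_i(N)=x_{\sigma(i)}$, and also $Q(N)=\sigma^{-1}(Q)$. Specializing to $y=1$ turns this into $T_i(t+N)=T_{\sigma(i)}(t)$, after matching belt time with $T$-system time via \eqref{splitTsystem}; we apply the argument at both starting times $t=0,1$, or equivalently use the relation at a single seed and propagate it. The identity $Q(N)=\sigma^{-1}(Q)$, together with $Q(N)=(-1)^N Q$ from recurrence, shows that $\sigma$ is an isomorphism $B\to \pm B$. Lemma \ref{isomLem} then says $\sigma$ preserves the colouring when $N$ is even and reverses it when $N$ is odd. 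Because $\sigma$ maps $B$ to $\pm B$, it is an automorphism of the pair $(\Gamma,\Delta)$, possibly swapping the roles of $\Gamma$ and $\Delta$ consistently with the colour change.

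The remaining claim, and the step I expect to be the main obstacle, is that $\sigma$ has order at most two. Iterating the relation gives $T_i(t+2N)=T_{\sigma^2(i)}(t)$. The known period of the $T$-system is a divisor of $2N$ (the classification of \cite{chin}, or Keller's theorem), so $T_i(t+2N)=T_i(t)$. The initial cluster variables are algebraically independent, so $\sigma^2=\mathrm{id}$. The subtle points are twofold. First, we must confirm that the period really divides $2(h_\Gamma+h_\Delta)$ for every ADE bigraph, including twists and the exceptional ones; otherwise we would fall back on inspecting the explicit automorphism groups listed in the proof of Lemma \ref{isomLem}, all of whose relevant elements are involutions. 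Second, we must carefully identify the relabeled coframed seed at time $h_\Gamma$ with a genuine framed seed of $-B$, so that Proposition \ref{maximalGreen} can legitimately be reapplied.
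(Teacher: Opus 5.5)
Your proposal is correct and follows the paper's proof. Concatenated maximal green sequences give a frozen isomorphism (a permuted $C$-matrix) at time $N=h_\Gamma+h_\Delta$; Theorem \ref{periodicity} transfers the permutation to the cluster variables; Lemma \ref{isomLem} gives the colour statement; and the period dividing $2N$ forces $\sigma^2=\mathrm{id}$. You give more detail than the paper, which simply invokes Remark \ref{remark1}, and the parity check you flag does go through: negating the seed $[(-1)^{h_\Gamma}B,\,-P]$ at time $h_\Gamma$ gives a framed seed of $(-1)^{h_\Gamma+1}B$ with only frozen vertices relabeled, and in both parities the next belt colour is exactly the one Proposition \ref{maximalGreen} mutates first in its $h_\Delta$-sequence.
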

\begin{proof}
    Let the initial seed be our coframed matrix $\check{B}$, and let $\hat{y}_i$ denote the tropical evaluation of the coefficient $y_i$ (in the tropical semifield Trop($y$)).
    As noted in Remark \ref{remark1}, applying $\mu_{\circ}\mu_{\bullet}\cdots$ ($h_{\Gamma} + h_{\Delta}$ factors) gives us a frozen isomorphism, or a permutation $\sigma$ of the $C$-matrix.
    Moreover by the separation formula (\ref{separation}), $c$-vectors are exactly the exponent vectors for principal coefficients, the tropical evaluation of coefficients, so we have $\hat{y}_i(t) = \hat{y}_{\sigma(i)}$, $\forall i\in [n]$.
    Thus by the periodicity theorem (\ref{periodicity}), the cluster variables also exhibit the same permutation $\sigma$ of variables at time $N = h_{\Gamma} + h_{\Delta}$.

    Recall that each application of $\mu_{\circ}, \mu_{\bullet}$ switches the source-sink structure of $(\Gamma, \Delta)$.
    
    By Lemma \ref{isomLem}, we know when $N$ is even, the isomorphism is color preserving.
    This means that at time $t = N$, we are left with exactly the same matrix $\check{B}$, and applying the mutation for the second half of the period will apply the same isomorphism again.
    When $N$ is odd, the isomorphism is color reversing. 
    So at time $t = N$, we are left in a reversed position, where all sources and sinks have switched roles and colors.
    Since $N$ is odd, our mutation sequence moving into the second half period is also reversed, so applying the mutation sequence for the second half will apply the same isomorphism again.
    Thus, the second half of the period behaves identically to the first half, with the clusters permuted at each step.
    Since $B$ is Zamolodchikov periodic with period $2N$, this frozen isomorphism must be of order at most two.
\end{proof}

The next three lemmas help prove that half-periodicity commutes with taking transpose.

\begin{lemma}
\label{tropicalT}
    Let $B$ be a bipartite recurrent $B$-matrix.
    Let $\sigma\in S_n$ be a bipartite color preserving automorphism of $B$, and let $N\in \mathbb{Z}_{>0}$ be an even integer. 
    The following are equivalent:
    \begin{itemize}
        \item[(1)]
            $\boldsymbol{t}^{\delta_j}_i(t + N) = \boldsymbol{t}^{\delta_j}_{\sigma(i)}(t)$ for all $i, j\in [n], t\in \mathbb{Z}$.
        \item[(2)]
            $\boldsymbol{t}^{\lambda}_i(t + N) = \boldsymbol{t}^{\lambda}_{\sigma(i)}(t)$ for all $i\in [n], t\in \mathbb{Z}$, for any initial labeling $\lambda\in \mathbb{R}^n$.
        \item[(3)]
            There exists a labeling $c\in\mathbb{R}_{>0}^n$ such that $T_i(t + N) = c_{\sigma(i)}T_{\sigma(i)}(t)$ for all $i\in [n], t\in\mathbb{Z}$.
    \end{itemize}
\end{lemma}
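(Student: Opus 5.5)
We prove the three implications $(2)\Rightarrow(1)$, $(1)\Rightarrow(2)$ and $(2)\Leftrightarrow(3)$. The tool throughout is the Laurent phenomenon together with the standard description of the tropical $T$-system as the degree data of the $T$-system. By the Laurent phenomenon, each $T_i(t)$ is a Laurent polynomial in the initial variables $x_k$ with nonnegative integer coefficients. Tropicalizing the $T$-system relations (which are subtraction-free) shows that $\boldsymbol{t}^{\lambda}_i(t)$ is obtained from $T_i(t)$ by the max-plus substitution $x_k\mapsto\lambda_k$. Concretely, $\boldsymbol{t}^{\lambda}_i(t)=\max_{m}\langle m,\lambda\rangle$, where $m$ ranges over the exponent vectors of the monomials of $T_i(t)$ with nonzero coefficient. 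Equivalently, $\boldsymbol{t}^{\lambda}_i(t)$ is the support function of the Newton polytope $\mathrm{Newt}(T_i(t))$ evaluated at $\lambda$. Because of positivity, no cancellation occurs, so the tropicalization is exactly this support function. This is the bridge between (2) and (3).

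$(2)\Rightarrow(1)$ is immediate by specializing $\lambda=\delta_j$. For $(1)\Rightarrow(2)$, we use that the tropical $T$-system is determined by its initial data at two consecutive times through the piecewise-linear recurrence. We must show that knowing the half-period map on the basis vectors $\delta_j$ forces it for all $\lambda$. First, the tropical evolution is positively homogeneous of degree one in $\lambda$, so it suffices to treat rays. Next, (1) applied at $t=0,1$ says that the state at time $N$ equals the $\sigma$-permuted initial state on the basis vectors. Here we use that $N$ is even and $\sigma$ is color preserving, so the parity split of the system is respected. Moreover, each $\boldsymbol{t}_i(N)$, viewed as a function of $\lambda$, is convex piecewise linear, being a support function. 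Convexity alone does not let values at the $\delta_j$ determine the function. The key extra input is therefore this: (1) for all $t$ gives, in particular, $\boldsymbol{t}^{\delta_j}_i(N)=\delta_{j,\sigma(i)}$ and the analogous statement at $N+1$, which means the tropical state returns to a standard basis vector.

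We will then argue at the level of Newton polytopes. A convex piecewise-linear function $\phi$ with $\phi(0)=0$ satisfies $\phi(\lambda)\ge\ell(\lambda)$ for the linear function $\ell$ interpolating the $\delta_j$. Apply this to both $\boldsymbol{t}_i(N)$ and the backward evolution, using the symmetry $t\mapsto -t$ of the $T$-system, which is again a tropical $T$-system of the same shape. Combining the two inequalities through the recurrence pins $\mathrm{Newt}(T_i(N))$ down to the single point $\delta_{\sigma(i)}$. So $T_i(N)$ is a monomial $c\,x_{\sigma(i)}$, and then (2) follows at all times by induction via the recurrence. This argument also yields $(1)\Rightarrow(3)$. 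Once $T_i(N)=c_{\sigma(i)}x_{\sigma(i)}$ and $T_i(N+1)=c_{\sigma(i)}x_{\sigma(i)}$, the $T$-system recursion propagates the relation $T_i(t+N)=c_{\sigma(i)}T_{\sigma(i)}(t)$. The constants must be compatible with the relations, and this follows because $\sigma$ is an automorphism of $B$ and the relations are homogeneous. Conversely, $(3)\Rightarrow(2)$ follows by tropicalizing: multiplication by the positive constant $c_{\sigma(i)}$ does not change the Newton polytope, so $\boldsymbol{t}^{\lambda}_i(t+N)=\boldsymbol{t}^{\lambda}_{\sigma(i)}(t)$.

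The main obstacle is the step from the basis vectors to a monomial. We must show that a Laurent polynomial with positive coefficients whose tropicalization agrees with the coordinate function $\lambda_{\sigma(i)}$ at every $\delta_j$, including under backward evolution, is a monomial. The first thing to try is the inverse argument. The map from time $0$ to time $N$ is a composition of birational positive maps, and by the separation and periodicity considerations above, its inverse is again given by positive Laurent polynomials. Now suppose a monomial-times-positive polynomial composed with a positive Laurent map yields the identity. Degree considerations on Newton polytopes then force both maps to be monomial, since Minkowski sums of polytopes can only collapse to a point when each summand is a point.
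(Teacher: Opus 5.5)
\textbf{There is a genuine gap: the step from (1) to a monomial.} Your outer implications are fine. $(2)\Rightarrow(1)$ is immediate. $(3)\Rightarrow(2)$, via the identification of the tropical $T$-system with the support function of $\mathrm{Newt}(T_i(t))$ (which a positive scalar does not change), is what the paper takes from \cite[Lemma~4.2]{chin}. The gap is the step you flag yourself: from (1) to $T_i(\eta_i+N)=c\,x_{\sigma(i)}$. Both your $(1)\Rightarrow(2)$ and $(1)\Rightarrow(3)$ rest on it.

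Put $P=\mathrm{Newt}(T_i(\eta_i+N))$ and $h_P(\lambda)=\max_{m\in P}\langle m,\lambda\rangle$. Hypothesis (1) gives only $h_P(\delta_j)=\max_{m\in P}m_j=\delta_{j,\sigma(i)}$, i.e.\ upper bounds on the coordinates of $P$. For instance, $x_1+x_1x_2^{-1}$ has the same maximal degrees as $x_1$.

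The lower bound you propose is false. $h_P$ is convex and positively homogeneous, hence subadditive, so on $\mathbb{R}^n_{\ge0}$ one has $h_P(\lambda)\le\sum_j\lambda_j h_P(\delta_j)$, the reverse of your inequality. For $x_1+x_2$, $h_P(\delta_1+\delta_2)=1<2$. Moreover, the numbers $h_P(\delta_j)$ give no lower bound on $\min_{m\in P}m_j$, so ``combining the two inequalities through the recurrence'' pins nothing down. Running the same reasoning on the backward evolution only yields upper bounds on a different polytope, $\mathrm{Newt}(T_{\sigma(i)}(\eta_i-N))$.

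The fallback does not close the gap either. For a composite $G(F(x))$ of Laurent maps, the Newton polytope is not a Minkowski sum of those of $G$ and $F$: negative exponents in $G$ become denominators, i.e.\ Minkowski differences. So the principle that a Minkowski sum is a point only if each summand is cannot be applied to $G\circ F=\mathrm{id}$.

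\textbf{What is missing, and how the paper supplies it.} You need a matching \emph{lower} bound on each coordinate of $P$, namely the minimal degrees. Note that $-\deg_{\min}(j,f)=\max_{m\in\mathrm{Newt}(f)}\langle m,-\delta_j\rangle$ is the tropical value at $-\delta_j$, not at $\delta_j$.

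The paper does not try to get this from convexity. It first reduces to $t=\eta_i$: the time-$N$ evolution is a single rational map, applied to the state at time $t$ via the substitution $x_j\coloneqq T_j(t+\eta_j)$. It then invokes \cite[Lemma~4.3]{chin} to convert (1) into two equalities for all $j$: $\deg_{\max}(j,T_i(\eta_i+N))=\deg_{\max}(j,x_{\sigma(i)})$ and $\deg_{\min}(j,T_i(\eta_i+N))=\deg_{\min}(j,x_{\sigma(i)})$. Together these force the $j$-th coordinate of every point of $P$ to equal $\delta_{j,\sigma(i)}$. So $P$ is a point, $T_i(\eta_i+N)=c_{\sigma(i)}x_{\sigma(i)}$, and $c_{\sigma(i)}>0$ by Newton-positivity.

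That substitution argument is also the correct justification for propagating (3) to all $t$. Your appeal to homogeneity does not work, because the $T$-system relations are not invariant under rescaling the $x_k$ by arbitrary constants.
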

\begin{proof}

    (2)$\implies$(1) is trivial, and (3)$\implies$(2) follows directly from \cite[Lemma~4.2]{chin}.

    To show (1)$\implies$(3), it is enough to consider $t = \eta_j$, because if $T_i(\eta_i + N) = c_{\sigma(i)}T_{\sigma(i)}(\eta_{\sigma(i)})$ for all $i\in [n]$, then we can get the case of arbitrary $t$ via the substitution $x_j \coloneqq T_i(t + \eta_i)$ for all $i\in [n]$.
    By \cite[Lemma~4.3]{chin}, (1) implies that for every $i, j\in [n]$,
    \begin{align*}
        \deg_{\max}(j, T_i(\eta_i + N)) &= \deg_{\max}(j, T_{\sigma(i)}(\eta_{\sigma(i)})) \quad \text{and}\\
        \deg_{\min}(j, T_i(\eta_i + N)) &= \deg_{\min}(j, T_{\sigma(i)}(\eta_{\sigma(i)})).
    \end{align*}
    Since $T_{\sigma(i)}(\eta_{\sigma(i)}) = x_{\sigma(i)}$, $\deg_{\max}(j, T_{\sigma(i)}(\eta_{\sigma(i)})) = \deg_{\min}(j, T_{\sigma(i)}(\eta_{\sigma(i)})) = \delta_{\sigma(i)}(j)$ and therefore $T_i(\eta_i + N)$ differs from $T_{\sigma(i)}(\eta_{\sigma(i)})$ by a scalar multiple $c_{\sigma(i)}$. 
    Since they are both Newton-positive, $c_{\sigma(i)}$ is necessarily positive, so (3) follows.
\end{proof}

\begin{remark}
    The above proof is identical to the proof of \cite[Proposition~4.4]{chin}, but is a slightly stronger statement.
\end{remark}

\begin{lemma}
\label{scalarsCommute}
    Let $B$ be a bipartite recurrent skew-symmetrizable matrix. 
    Let $c\in \mathbb{R}_{>0}^n$ be the vector of scalars such that $c_ib_{ij} = c_jb_{ji}$ $\forall i, j\in [n]$. 
    Let $\sigma\in S_n$ be an automorphism of $B$. Then, $c_{\sigma(i)} = c_i$, $\forall i\in [n]$.
\end{lemma}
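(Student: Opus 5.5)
The plan is to show that $c\circ\sigma$ is again a skew-symmetrizer of $B$, and then use the fact that a skew-symmetrizer is unique up to a positive scalar on each connected component.

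\textbf{Step 1: $c\circ\sigma$ is a skew-symmetrizer.} Define $c'_i\coloneqq c_{\sigma(i)}$. Since $\sigma$ is an automorphism, $b_{\sigma(i)\sigma(j)}=b_{ij}$ for all $i,j$. Applying the skew-symmetrizing relation at the pair $(\sigma(i),\sigma(j))$ gives
\[
c'_i b_{ij}=c_{\sigma(i)}b_{\sigma(i)\sigma(j)}=-c_{\sigma(j)}b_{\sigma(j)\sigma(i)}=-c'_j b_{ji}.
\]
Here I use the sign convention $c_ib_{ij}=-c_jb_{ji}$ from the preliminaries. The argument is identical with the sign as printed in the statement. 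So $c'$ is a positive skew-symmetrizer of $B$.

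\textbf{Step 2: uniqueness up to scalar on a component.} Suppose $B$ is connected, meaning its underlying graph (edges $i\sim j$ when $b_{ij}\neq0$) is connected. Along each edge, any symmetrizer $d$ satisfies $d_j/d_i=-b_{ij}/b_{ji}$, and this ratio is determined by $B$ alone. Propagating along paths shows that any two positive symmetrizers are proportional. Hence $c\circ\sigma=\lambda c$ for some $\lambda>0$.

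\textbf{Step 3: the scalar is $1$.} Iterating Step 2 gives $c\circ\sigma^k=\lambda^k c$. Since $\sigma$ has finite order $m$, we get $c=\lambda^m c$. Because $c$ and $\lambda$ are positive, $\lambda^m=1$ forces $\lambda=1$. Alternatively, summing coordinates gives $\sum_i c_{\sigma(i)}=\sum_i c_i$, which forces $\lambda=1$ directly. Either way, $c_{\sigma(i)}=c_i$.

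\textbf{Main obstacle: disconnected $B$.} If $B$ is disconnected, $c$ is only determined up to an independent scalar on each component. In that case the statement fails for an arbitrary choice of $c$ when $\sigma$ swaps two isomorphic components. I therefore plan to read $c$ as the canonical symmetrizer: on each connected component, normalize $c$ so that its minimal entry equals $1$ (equivalently, take the primitive positive integer vector). The automorphism $\sigma$ maps each component $K$ isomorphically onto a component $\sigma(K)$ and preserves the edge ratios $-b_{ij}/b_{ji}$. So $c\circ\sigma$ restricted to $K$ is a symmetrizer of $K$ with minimal entry $1$. By Step 2 applied to the connected matrix $B|_K$, it equals $c|_K$.

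This normalization is the only delicate point. The Zamolodchikov periodic matrices of interest are typically connected, and there Steps 1–3 apply verbatim.
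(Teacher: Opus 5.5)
Your proof is correct and follows the same route as the paper: $c\circ\sigma$ is again a positive skew-symmetrizer, and connectedness of $B$ forces it to be proportional to $c$. You also supply two points the paper passes over. The first is why the proportionality constant equals $1$, via the finite order of $\sigma$ or by comparing coordinate sums. The second is that connectedness is genuinely needed, since the paper simply asserts ``$B$ is strongly connected'' although this is not among the lemma's hypotheses; your per-component normalization is a reasonable fix when it fails.
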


\begin{proof}
    After applying the permutation, we get
    $c_{\sigma(i)}b_{\sigma(i)\sigma(j)} = -c_{\sigma(j)}b_{\sigma(j)\sigma(i)}.$
    
    Since $b_{\sigma(i)\sigma(j)} = b_{ij}$, this new permuted vector still works to symmetrize $B$:
    \begin{align*}
        c_{\sigma(i)}b_{ij} = c_{\sigma(i)}b_{\sigma(i)\sigma(j)} = -c_{\sigma(j)}b_{\sigma(j)\sigma(i)} = -c_{\sigma(j)}b_{ji}, \forall i, j\in [n].
    \end{align*}
    As $B$ is strongly connected, $\begin{bmatrix}
        c_{\sigma(1)}\\
        c_{\sigma(2)}\\
        \vdots\\
        c_{\sigma(n)}
    \end{bmatrix}$ is a scalar multiple of $\begin{bmatrix}
        c_{1}\\
        c_{2}\\
        \vdots\\
        c_{n}
    \end{bmatrix}$.
    Thus, $c_{\sigma(i)} = c_i$, $\forall i\in [n]$.
\end{proof}

\begin{lemma}
\label{transposeCommutes}
    Let $\sigma\in S_n$ be a bipartite color preserving automorphism of $B$, and let $N\in \mathbb{Z}_{>0}$ be an even integer. 
    For any initial labeling $\lambda\in\mathbb{R}^n$, let the tropical $T$-system of $B$ satisfy
    $$\boldsymbol{t}^{\lambda}_i(t + N) = \boldsymbol{t}^{\lambda}_{\sigma(i)}(t), \text{ for all } i\in [n], t\in \mathbb{Z}.$$
    Then, the tropical $T$-system of the Langland dual $-B^{\top}$ satisfies the same relations.
\end{lemma}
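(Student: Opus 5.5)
The plan is to transport the relation through a rescaling of the tropical variables. In the skew-symmetrizable setting with symmetrizer $c$, the Langlands dual should behave like $B$ after this rescaling. Write $B=(\Gamma,\Delta)$, so that $-B^{\top}=(\Gamma^{\top},\Delta^{\top})$ with the same bipartite coloring. Its tropical $T$-system is
\[
\boldsymbol{s}_k(t+1)+\boldsymbol{s}_k(t-1)=\max\Bigl(\sum_i\Gamma_{ki}\boldsymbol{s}_i(t),\sum_j\Delta_{kj}\boldsymbol{s}_j(t)\Bigr).
\]
From $c_ib_{ij}=-c_jb_{ji}$ and the sign pattern in the definitions of $\tilde\Gamma,\tilde\Delta$, we get $c_i\Gamma_{ij}=c_j\Gamma_{ji}$ and $c_i\Delta_{ij}=c_j\Delta_{ji}$. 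Hence $\Gamma_{ki}=c_i\Gamma_{ik}/c_k$, and likewise for $\Delta$.

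The key step is to set $\boldsymbol{s}_k(t)=\boldsymbol{t}_k(t)/c_k$, where $\boldsymbol{t}$ solves the tropical $T$-system of $B$. Since $c_k>0$, dividing by $c_k$ commutes with $\max$. Substituting gives
\[
\boldsymbol{s}_k(t+1)+\boldsymbol{s}_k(t-1)=\tfrac1{c_k}\max\Bigl(\sum_i\Gamma_{ik}\boldsymbol{t}_i(t),\sum_j\Delta_{jk}\boldsymbol{t}_j(t)\Bigr),
\]
which becomes, after using $\Gamma_{ki}=c_i\Gamma_{ik}/c_k$,
\[
\max\Bigl(\sum_i\Gamma_{ki}\boldsymbol{s}_i(t),\sum_j\Delta_{kj}\boldsymbol{s}_j(t)\Bigr).
\]
So $\boldsymbol{s}$ solves the dual system. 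Moreover, the map $\lambda\mapsto(\lambda_k/c_k)_k$ is a bijection of $\mathbb{R}^n$, so every initial labeling $\mu$ for $-B^{\top}$ is reached as $\boldsymbol{s}=\boldsymbol{t}^{\lambda}/c$ with $\lambda_k=c_k\mu_k$. Because the initial data at times $0,1$ determine the whole system on the relevant parity component, the dual solution $\boldsymbol{s}^{\mu}$ is exactly $\boldsymbol{t}^{\lambda}/c$.

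Now apply the hypothesis for this $\lambda$:
\[
\boldsymbol{s}^{\mu}_i(t+N)=\boldsymbol{t}^{\lambda}_i(t+N)/c_i=\boldsymbol{t}^{\lambda}_{\sigma(i)}(t)/c_i .
\]
By Lemma \ref{scalarsCommute}, $c_{\sigma(i)}=c_i$, so this equals $\boldsymbol{t}^{\lambda}_{\sigma(i)}(t)/c_{\sigma(i)}=\boldsymbol{s}^{\mu}_{\sigma(i)}(t)$, which is the desired relation. We also need $\sigma$ to be an automorphism of $-B^{\top}$, which is immediate from $b_{\sigma(i)\sigma(j)}=b_{ij}$. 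Because $\sigma$ is color preserving and $N$ is even, the indices $(i,t+N)$ and $(\sigma(i),t)$ lie in the same parity component (\ref{splitTsystem}), so both sides are defined.

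The main obstacle is the bookkeeping of the symmetrizer convention. The paper writes both $c_ib_{ij}=-c_jb_{ji}$ and $c_ib_{ij}=c_jb_{ji}$, and one must check whether the correct rescaling is dividing by $c_k$ or multiplying by it. I would fix this by testing on a $2\times2$ example of type $B_2$. Lemma \ref{scalarsCommute} also relies on strong connectivity; if $B$ is disconnected, I would argue componentwise. On each component the vector $(c_{\sigma(i)})_i$ is a positive multiple of $c$, and since $\sigma$ has finite order, iterating forces that multiple to be $1$.
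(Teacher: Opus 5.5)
Your argument is correct and is essentially the paper's proof. The paper also uses the rescaling $\boldsymbol{t}'^{\lambda}_i=\frac{1}{c_i}\boldsymbol{t}^{\tilde\lambda}_i$ with $\tilde\lambda_i=c_i\lambda_i$, then $c_{\sigma(i)}=c_i$ from Lemma~\ref{scalarsCommute}; it cites the rescaling from \cite[Lemma~4.6]{chin}, whereas you verify it directly from $c_i\Gamma_{ij}=c_j\Gamma_{ji}$, which also resolves your convention worry.

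The one slip is in your aside on disconnected $B$. If $\sigma$ permutes components, iterating only shows that the product of the per-component multiples around a cycle is $1$, not that each multiple is $1$. Instead, choose a $\sigma$-invariant symmetrizer such as $\sum_m c\circ\sigma^m$, which is all the rescaling step needs.
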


\begin{proof}
    Let $c\in \mathbb{R}_{>0}^n$ be the vector of scalars such that $c_ib_{ij} = c_jb_{ji}$ $\forall i, j\in [n]$.
    Let $\boldsymbol{t}'$ denote the tropical $T$-system of $-B^{\top}$.
    It is enough to show $\boldsymbol{t}'^{\lambda}_i(\eta_i + N) = \boldsymbol{t}'^{\lambda}_{\sigma(i)}(\eta_{\sigma(i)})$.
    From \cite[Lemma~4.6]{chin} and Lemma \ref{scalarsCommute}, moving between the tropical $T$-systems of the Langland duals $B$ and $-B^{\top}$ looks like
    \begin{align*}
        \boldsymbol{t'}_i^{\lambda}(\eta_i + N) &= \frac{1}{c_i} \boldsymbol{t}_i^{\tilde{\lambda}}(\eta_i + N) = \frac{1}{c_i} \boldsymbol{t}_{\sigma(i)}^{\tilde{\lambda}}(\eta_{\sigma(i)}) = \frac{1}{c_{\sigma(i)}} \boldsymbol{t}_{\sigma(i)}^{\tilde{\lambda}}(\eta_{\sigma(i)})
        =  \boldsymbol{t'}_{\sigma(i)}^{\lambda}(\eta_{\sigma(i)}).
    \end{align*}
    Here, $\tilde{\lambda}\in \mathbb{R}^n$ is the labeling defined by $\tilde{\lambda}_i = c_i\lambda_i$ for each $i\in [n]$.
\end{proof}

Now, we are ready to prove half-periodicity.
\begin{proof}[Proof of Theorem \ref{mainThm}]
    Let $N = h_{\Gamma} + h_{\Delta}.$
    From \cite[Proposition~3.22]{chin}, any Zamolodchikov periodic $B$-matrix can be obtained from an ADE bigraph via folding and taking transpose.
    By Proposition \ref{ADE}, it suffices to show half-periodicity is preserved under folding and taking transpose.

    It is clear that half-periodicity is preserved under folding.
    A cluster variable $x_I'$ of $(f(\Gamma), f(\Delta))$ for an $f$-orbit $I$ is exactly equal to the cluster variable $x_i'$ of $(\Gamma, \Delta)$ for any $i\in I$, under the initial specialization $x_{f(i)} = x_i$ for all $i\in I$.

    We also claim that half-periodicity for even $N$ is preserved under taking transpose.
    By Lemma \ref{tropicalT}, if we start with a matrix $B$ such that $T_i(t + N) = c_{\sigma(i)}T_{\sigma(i)}(t)$ for all $i\in [n]$, then the tropical $T$-system satisfies
    $\boldsymbol{t}^{\lambda}_i(t + N) = \boldsymbol{t}^{\lambda}_{\sigma(i)}(t)$ for all $i\in [n], t\in \mathbb{Z}$, for every initial labeling $\lambda\in \mathbb{R}^n$.
    By Lemma \ref{transposeCommutes}, the same relations hold for the tropical $T$-system $\boldsymbol{t}'$ associated to the Langland dual $-B^{\top}.$
    Applying Lemma \ref{tropicalT} once more tells us the $T$-system $T'$ associated with $-B^{\top}$ will satisfy
    $$T_i'(t + N) = c_{\sigma(i)}T_{\sigma(i)}'(t), \forall i\in [n].$$
    
    Since the $T$-system is periodic with period $2N$, we have $\sigma^2 = 1$ and
    \[T_i(t + 2N) = c_{\sigma(i)}T_{\sigma(i)}(t + N) = c_ic_{\sigma(i)}T_i(t) = T_i(t).\]
    Thus, $c_ic_{\sigma(i)} = 1$ for every $i\in [n]$.
    By the Laurent phenomenon (\cite{fomin1}), all cluster variables are Laurent polynomials in the initial cluster with integral coefficients, so in fact $c_i = 1$ for all $i\in [n]$.

    In the construction in \cite{chin}, transpose is never applied to a matrix $B$ with odd $N$ to obtain one of the non-ADE families.
    So, these operations generate all Zamolodchikov periodic $B$-matrices.
\end{proof}

In fact, many of these families have no nontrivial automorphisms of order two.
In such cases, half-periodicity yields the identity permutation at time $t = h_{\Gamma} + h_{\Delta}$.
The families which do so are listed below.

\begin{corollary}
    Let $B = (\Gamma, \Delta)$ be one of the following families of rank $n$:
    \begin{itemize}
        \item[(1)] 
            tensor products $\Gamma\otimes \Delta$, for $\Gamma, \Delta\in \{B_n, C_n, E_7, E_8, F_4, G_2\}$,
        \item[(2)] 
            double bindings $B_n\bowtie C_n$, $B_3\bowtie_{1, 2} G_2$, $C_3\bowtie_{1, 2} G_2$, $B_4\boxtimes C_4$,
        \item[(3)]
            $D_6\ast D_6$, $D_6\ast D_6\equiv D_6$, $D_6\ast D_6\equiv D_6\equiv D_6$,
        \item[(4)]
            $E_8\ast E_8$, $E_8\ast E_8\equiv E_8$, $E_8\ast E_8\equiv E_8 \equiv E_8$,
        \item[(5)]
            $B_n\ast C_n$, $(BC^{m-1})_n$, $(B^{m-1}C)_n$, $B_n\equiv B_n\ast C_n\equiv C_n$,
        \item[(6)]
            $F_4\ast F_4\equiv \dots \equiv F_4$,
        \item[(7)]
            $E_7\boxtimes D_{10}$.
    \end{itemize}
    Then, the $T$-system associated to $B$ satisfies $T_i(t + h_{\Gamma} + h_{\Delta}) = T_i(t)$, for all $i\in [n]$.
\end{corollary}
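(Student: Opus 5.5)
The plan is to combine Theorem \ref{mainThm} with the observation that a permutation $\sigma$ which is trivial must in fact be forced to be trivial by the structure of each listed family. By Theorem \ref{mainThm}, for each family there is an automorphism $\sigma$ of $(\Gamma,\Delta)$ of order at most two with $T_i(t+h_\Gamma+h_\Delta)=T_{\sigma(i)}(t)$. Moreover $\sigma$ is color preserving when $N=h_\Gamma+h_\Delta$ is even and color reversing when $N$ is odd. So it suffices to show that, for each listed $B$, every automorphism of $(\Gamma,\Delta)$ of the relevant color type is the identity.

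First I would dispose of parity. In every listed family, both $\Gamma$ and $\Delta$ have even Coxeter numbers. This holds because $A_{2n}$ never appears: $h(B_n)=h(C_n)=2n$, and the numbers $h(F_4)=12$, $h(G_2)=6$, $h(E_7)=18$, $h(E_8)=30$, $h(D_n)=2n-2$ are all even. Hence $N$ is even and $\sigma$ is color preserving. It then remains to check that each $(\Gamma,\Delta)$ admits no nontrivial color-preserving automorphism. An automorphism of $B$ sends $\tilde\Gamma$-edges to $\tilde\Gamma$-edges, since the sign pattern together with the colors determines which component an edge lies in. So $\sigma$ restricts to automorphisms of the $\Gamma$-components and of the $\Delta$-components.

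For the non-simply-laced diagrams $B_n$, $C_n$, $F_4$, $G_2$, note that a diagram automorphism must respect the skew-symmetrizer. By Lemma \ref{scalarsCommute}, $c_{\sigma(i)}=c_i$, and in addition $\sigma$ preserves the asymmetric entries $b_{ij}\neq -b_{ji}$. These diagrams have no nontrivial automorphisms respecting the arrow of the multiple edge, and neither do $E_7$ or $E_8$. For tensor products $\Gamma\otimes\Delta$ with both factors in this list, $\sigma$ permutes the $\Gamma$-components, which are copies of $\Gamma$ indexed by vertices of $\Delta$, in a way that induces an automorphism of $\Delta$. Likewise it acts on each copy by an automorphism of $\Gamma$. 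Both of these are trivial, so $\sigma$ is trivial, and the swap $\Gamma\leftrightarrow\Delta$ is excluded because it does not preserve the edge types. For the remaining families (2)–(7), I would argue case by case from their explicit descriptions in \cite{chin}. Each is built from copies of diagrams in the rigid list ($B$, $C$, $F_4$, $E_8$, $E_7$, and $D_6$, $D_{10}$ with their specific bindings). Any automorphism must fix the distinguished binding vertices, since these are where $\Gamma$- and $\Delta$-degrees differ, and then rigidity propagates along the components.

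The main obstacle is the $D$-type cases $D_6\ast D_6$ and its iterates, and $E_7\boxtimes D_{10}$, because $D_n$ itself has a nontrivial fork-swapping automorphism. My first step there would be to check, directly from the gluing pattern, that the $\Delta$-edges attach the two fork leaves of each $D$ component to non-isomorphic neighborhoods, which breaks the symmetry. Failing that, I would fall back on Proposition \ref{ADE} and compute the half-period permutation directly via the maximal green sequence of Proposition \ref{maximalGreen}: using the $C$-matrix after $N$ mutations, I would verify that $-C$ is the identity permutation matrix.
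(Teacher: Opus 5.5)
Your proposal is correct and takes essentially the paper's approach: Theorem \ref{mainThm} supplies an automorphism $\sigma$ of order at most two, and one then rules out nontrivial such automorphisms family by family, which is all the paper's ``small amount of casework'' does. Your extra steps make explicit what the paper leaves implicit: you note that $N=h_\Gamma+h_\Delta$ is even for every listed family, so $\sigma$ is color preserving and hence $\Gamma/\Delta$ preserving, and you give the product-structure argument for $\Gamma\otimes\Delta$. For the binding families, evenness of $N$ is better justified by the observation in the proof of Lemma \ref{isomLem} that odd $N$ arises only from $A_{2n}\otimes\Delta$, since the unnamed components of those families are not covered by your list of Coxeter numbers.
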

\begin{proof}
    It is a small amount of casework to check that each of these families have no nontrivial automorphisms of order two.
\end{proof}

\section{Colored mutations}
\label{sec:coloredMutations}
Let $B$ be a Zamolodchikov periodic $B$-matrix, and let $\boldsymbol{t}_i^{\lambda}$ be its corresponding tropical $T$-system with initial labeling $\lambda\in \mathbb{R}^n$.
Recall the form for tropical mutation at index $k\in [n]$:
\begin{align*}
    \boldsymbol{t}_k^{\lambda}(t + 1) + \boldsymbol{t}_k^{\lambda}(t - 1) = \max\left(\sum_i\Gamma_{ik}\boldsymbol{t}_i^{\lambda}(t), \sum_j \Delta_{jk}\boldsymbol{t}_j^{\lambda}(t)\right).
\end{align*}
For a generic initial labeling $\lambda\in\mathbb{R}^n$, we can color this mutation either red or blue, depending on the evaluation of the max function. 
If the maximum is $\sum_i\Gamma_{ik}\boldsymbol{t}_i^{\lambda}(t)$, then we call this a \textit{$\Gamma$-mutation}, and color it red.
Otherwise if the maximum is $\sum_j\Delta_{jk}\boldsymbol{t}_j^{\lambda}(t)$, then we call this a \textit{$\Delta$-mutation}, and color it blue.

Recall that the number of roots in a root system is $h\cdot r$, where $h$ is the Coxeter number and $r$ is the rank.
In our previous work, we observed the following pattern.

\begin{conjecture}[\cite{chin}]
    Let $(\Gamma, \Delta)$ be an admissible Dynkin biagram with $r$ vertices.
    The number of $\Gamma$-mutations in one period is the total number of roots in the root systems associated to each $\Gamma$-component, $h_{\Gamma}\cdot r$.
    Similarly, the number of $\Delta$-mutations in one period is the total number of roots in the root systems associated to each $\Delta$-component, $h_{\Delta}\cdot r$.
\end{conjecture}

The following corollary of Theorem \ref{mainThm} verifies the above conjecture for Dynkin diagrams under a certain initial tropical labeling.
\begin{corollary}
    Let $\Lambda$ be a Dynkin diagram of rank $r$ with Coxeter number $h$, viewed as the tensor product $\Lambda\otimes A_1$.
    Let $\lambda\in\mathbb{R}^r$ be an initial labeling for the associated tropical $T$-system.
    When $\lambda_i < 0$ for every $i\in [r]$, one period contains exactly $hr$ red mutations and $2r$ blue mutations. Moreover, in one half period, the blue mutations are in bijection with the negative simple roots of $\Lambda$, while the red mutations are in bijection with the positive roots of $\Lambda$.
\end{corollary}
\begin{proof}
    By Theorem \ref{fominBijection}, the generic form of a cluster variable associated to a positive root $\alpha$ is
    $$\alpha\mapsto \frac{P_{\alpha}(x)}{x^{\alpha}},$$
    where $P_{\alpha}(x)$ is a polynomial in $\mathbb{Z}_{\geq 0}[x]$ with nonzero constant term.
    In particular, the constant term of $P_{\alpha}$ is an integer $\geq 1$, which will be the dominant term in the polynomial, as all variables are $x_i \ll 1$.
    When $\alpha$ is a positive root, the tropicalization of this cluster variable (viewed as an element in the tropical $T$-system) will be positive.

    Thus, a mutation dependent only on cluster variables associated to positive roots will be red.
    On the other hand, cluster variables associated to the negative simple roots are of the form $x_i, i\in [r]$.
    As $x_i \ll 1$, the tropicalization of this cluster variable will be negative, and a mutation dependent only on cluster variables associated to negative simple roots will be blue.

    It remains to check no mutation will take into account a combination of cluster variables of both types.
    This follows from \cite{fomin2}, where it is shown that the bipartite belt achieves every cluster variable for finite type cluster algebras.
    So by Theorem \ref{mainThm}, each of the $\frac{(h+2)r}{2}$ distinct cluster variables appears exactly twice in one period.
    Moreover, the negative simple roots only appear at $t = 0$ as a cluster, and at the half-period as a cluster.
    Thus any tropical mutation will either be dependent on a sum of only cluster variables $x_i$ associated to negative simple roots or only cluster variables associated to positive roots.

    Moreover, there are exactly $2r$ $\Delta$-mutations from mutating all of the cluster variables dependent on negative simple roots at times $t = 0$ and $t = h_{\Gamma} + h_{\Delta}$.
    The remaining $hr$ mutations are $\Gamma$-mutations from mutating cluster variables dependent on positive roots at all other time steps.
\end{proof}

\bibliographystyle{alpha}
\bibliography{sources}

\end{document}